\newtheorem{theo}{Theorem}[section]
\newtheorem{lem}[theo]{Lemma}
\newtheorem{prop}[theo]{Proposition}
\newtheorem{fact}[theo]{Fact}
\newtheorem{conj}[theo]{Conjecture}
\newtheorem{que}[theo]{Question}
\theoremstyle{definition}
\newtheorem{dfn}[theo]{Definition}
\newtheorem{rem}[theo]{Remark}
\newtheorem{ex}[theo]{Example}
\newcommand{\N}{\ensuremath{\mathbb{N}}}  
\newcommand{\Z}{\ensuremath{\mathbb{Z}}}
\newcommand{\R}{\ensuremath{\mathbb{R}}}
\newcommand{\M}{\ensuremath{\mathcal{M}}}
\newcommand{\vs}{\vspace{0.2cm}}
\DeclareMathOperator{\Aut}{Aut}
\DeclareMathOperator{\SO}{SO}
\DeclareMathOperator{\id}{id}
\begin{document}

\author{Annalisa Conversano} 
 
\title{Nilpotent groups, o-minimal Euler characteristic, and linear algebraic groups}  

\address{Massey University Auckland, New Zealand} 

\email{a.conversano@massey.ac.nz}

\date{September 2, 2020. Research supported in part by the the Hausdorff Institute in Bonn.\\
\emph{Key words:} definable groups, o-minimality, nilpotency, linear algebraic groups.}
\maketitle

\vspace{-.5cm}
\begin{abstract} 
We establish a surprising correspondence between groups definable in o-minimal structures 
and linear algebraic groups, in the nilpotent case. 
It turns out that in the o-minimal context, like for finite groups, nilpotency is equivalent to the normalizer property or to uniqueness of Sylow subgroups.
As a consequence, we show definable algebraic decompositions of o-minimal nilpotent groups, and we prove that a nilpotent Lie group is definable in an o-minimal expansion of the reals if and only if it is a linear algebraic group.  
\end{abstract}

\thispagestyle{empty}

\vs
\section{Introduction}

Groups that are definable in o-minimal structures have been studied by many authors in the past thirty years, often in analogy with Lie groups.

By a conjecture of Pillay in \cite{PC}, now fully proved, every definable group $G$ has a canonical quotient $G/G^{00}$ that, endowed with the logic topology, is a compact Lie group \cite{BOPP}. When $G$ is definably compact, $G$ and $G/G^{00}$ have same dimension \cite{NIPI}, same homotopy invariants \cite{Berarducci-Mamino, Berarducci-Mamino-Otero}, and same first order theory \cite{HPP}. 

Strong connections with Lie groups have been found also for groups that are not compact.
For instance, every connected abelian real Lie group is the direct product of its maximal torus $T$ by a torsion-free closed subgroup. Similarly, 
by \cite{me2}, every o-minimal definably connected abelian group $G$ is the direct product of a maximal abstract torus $T$ (Definition \ref{abstract-dfn}) and the maximal torsion-free definable subgroup $\mathcal{N}(G)$ (Fact \ref{me-solvable}). Therefore every abelian o-minimal group is elementarily equivalent to a linear algebraic group of the same dimension. This is not the case, in general, for solvable groups, as shown by Hrushovski, Peterzil and Pillay in \cite{HPP}. They give an example of a solvable o-minimal group that is not elementarily equivalent to any definable real Lie group.  In this paper we study the intermediate class of nilpotent groups, showing a surprising similarity with the linear algebraic setting, even for finite groups. In Section 2 we prove the following: 


 \begin{theo}\label{first}
Let $G$ be a nilpotent definable group. Then


\begin{enumerate}

%
\item $G$ has maximal  abstractly compact subgroups $K$ such that 
\[
G = K \times \mathcal{N}(G)
\]

\noindent where $\mathcal{N}(G)$ is the maximal normal definable torsion-free subgroup of $G$.\\

\item If $G$ is definably connected then its center $Z(G)$ is definably connected and contains every abstractly compact subgroup of $G$.

\end{enumerate}

\end{theo}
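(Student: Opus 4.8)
The plan is to run an induction on the nilpotency class of $G$, taking the abelian decomposition $G = T \times \mathcal{N}(G)$ of \cite{me2} as the base case, and to establish the centrality statement (2) first, since it is the engine that makes the product in (1) direct. The guiding principle is the classical fact that a compact subgroup of a connected nilpotent Lie group is central: there one observes that $\Ad(G)$ is unipotent (because the Lie algebra is nilpotent) while a unipotent group contains no nontrivial compact subgroup, so $\Ad(K)=1$ and $K \le \ker \Ad = Z(G)$. The o-minimal substitute for $\Ad$ will be the conjugation action on the lower central series.

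Concretely, for part (2), let $D=G_c$ be the last nontrivial term of the lower central series; it is central, definable, and (as $G$ is definably connected) definably connected. Passing to $\bar G = G/D$, which has smaller class, induction gives that the image $\bar K$ of an abstractly compact $K$ lies in $Z(\bar G)$, i.e.\ $[G,K] \le D$. For fixed $k$ the map $g \mapsto [g,k]$ is then a definable homomorphism $G \to D$ (the commutator being central, $[g_1g_2,k]=[g_1,k][g_2,k]$), so $[G,k]$ is definably connected; and for fixed $g$ the map $k \mapsto [g,k]$ is a homomorphism $K \to D$. Thus as $g$ ranges over the definably connected group $G$ we obtain a connected family of homomorphisms $K \to D$ issuing from the trivial one at $g=e$. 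The crux is a \emph{rigidity} statement, the exact analog of ``unipotent groups have no compact subgroups'': a definable family of homomorphisms out of an abstractly compact group cannot deform away from the trivial homomorphism. Granting it, every $[g,k]=1$, so $K \le Z(G)$. That $Z(G)$ is definably connected then follows from the abelian decomposition of $Z(G)^{0}$: its torsion-free part is already central and definable, its abstract torus part is abstractly compact hence central by what precedes, and no torsion can survive outside the connected component once every abstractly compact subgroup is forced to be central.

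For part (1), set $N=\mathcal{N}(G)$ and $\bar G = G/N$. By maximality of $N$ one has $\mathcal{N}(\bar G)=1$, and I would show that a nilpotent definable group with trivial maximal torsion-free subgroup is abstractly compact, again by induction on class reducing to the abelian statement that a definably connected abelian group with $\mathcal{N}=1$ is an abstract torus (Definition \ref{abstract-dfn}). Choose, by Zorn's Lemma, a maximal abstractly compact subgroup $K$ of $G$. Then $K \cap N = \{e\}$ is immediate, since $K \cap N$ is abstractly compact and sits inside the torsion-free divisible group $N$, in which any abstractly compact subgroup is trivial. For surjectivity $KN=G$ I would prove that maximal abstractly compact subgroups map onto maximal abstractly compact subgroups under $G \to \bar G$; since $\bar G$ is itself abstractly compact this forces the image of $K$ to be all of $\bar G$. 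On the definably connected part the product is direct because, by part (2), $K$ is central and so commutes with $N$; the general case is reduced to this one by a count of components.

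The main obstacle is the rigidity step of part (2): making precise, in the o-minimal category and for possibly non-definable abstractly compact $K$, that the conjugation action of a definably connected nilpotent group is ``unipotent'' and hence trivial on compact-like subgroups. I expect this to rest on a structural description of abstractly compact subgroups --- their definable hulls behaving like abstract tori admitting only finitely many definable automorphisms --- so that a connected family of homomorphisms out of $K$ must be constant. The secondary difficulty is the surjectivity claim $KN=G$, where one must ensure that a maximal abstractly compact subgroup of $G$ remains maximal after projecting, rather than merely embedding into a strictly larger abstract torus of the quotient.
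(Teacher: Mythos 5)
Your outline transplants the classical Lie-theoretic argument ($\Ad$ unipotent $\Rightarrow$ compact subgroups central), but it leaves unproved precisely the two steps that carry all the weight, and both are genuinely problematic in the o-minimal category. First, the ``rigidity'' lemma at the heart of your part (2) is not a routine verification: an abstractly compact subgroup $K$ need not be definable (see Example \ref{str-ex}, where the abstract torus is not definable in the pure real field), so ``a definable family of homomorphisms out of $K$'' is not actually a definable object; and the abstract group underlying a definable torus admits plenty of nontrivial abstract homomorphisms into torsion-free divisible groups ($\R/\Z$ is divisible, hence surjects abstractly onto $\Q$), so triviality of $k \mapsto [g,k]$ cannot be extracted from compactness of $K$ alone. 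One is forced to pass to the definable hull $\langle K \rangle$ and control it, which is essentially what the paper's Proposition \ref{decomposition} does via the $0$-Sylow $A = \langle S \rangle$ and Lemma \ref{torsion}. Relatedly, you assert without argument that the last term of the lower central series is definable, and your induction needs that the image of an abstractly compact subgroup in $G/D$ is again abstractly compact, which does not follow formally from Definition \ref{abstract-dfn} (the witnessing normal subgroup need not interact well with $D$).

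Second, in part (1) the equality $KN = G$ --- the existence of a complement to $\mathcal{N}(G)$ --- is the actual content of the theorem, and ``show that maximal abstractly compact subgroups map onto maximal abstractly compact subgroups'' is a restatement of it, not a proof. Your appeal to Zorn's Lemma is also shaky: a union of a chain of abstractly compact subgroups carries no single definable witness, so maximal abstractly compact subgroups do not come for free; their existence is itself a theorem (Theorem 1.5 of \cite{me2} in the definably connected case). The paper proceeds by an entirely different route, namely Euler characteristic and Strzebonski's Sylow theory: Proposition \ref{str-nilpotent} (for $E(G)\neq 0$, $G = F \times \mathcal{N}(G)$ with $F$ the product of the unique $p$-Sylows) and Proposition \ref{decomposition} (for $E(G)=0$ and $G$ definably connected, a unique central $0$-Sylow $A = \langle S \rangle$ and $G = \mathcal{N}(G) \times T$), both proved by induction on dimension, produce the complement explicitly; Theorem \ref{first} is then a short assembly of cases, with $Z(G) = Z(\mathcal{N}(G)) \times T$ giving connectedness of the center directly. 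If you want to salvage your approach, the rigidity step is where to invest, and you should expect to end up proving something equivalent to Proposition \ref{decomposition}(1)--(2) along the way.
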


\noindent
As a consequence of decomposition $(1)$ above, in Section 4 we show that
linear algebraic groups are the only nilpotent Lie groups that can be defined in an o-minimal expansion of the real field:


 \begin{theo} \label{Lie}
Let $G$ be a nilpotent real Lie group. Then $G$ is definable in an o-minimal structure over the reals
if and only if $G$ is Lie isomorphic to a linear algebraic group.  \vs
\end{theo}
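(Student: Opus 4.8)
The plan is to treat the two implications separately, the substance lying in the ``only if'' direction.

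For the ``if'' direction, suppose $G$ is Lie isomorphic to $H(\mathbb{R})$ for a linear algebraic group $H$ defined over $\mathbb{R}$. Then $H(\mathbb{R})$ is a semialgebraic subgroup of some $\GL_n(\mathbb{R})$ whose multiplication and inversion are restrictions of rational, hence semialgebraic, maps. Since the real field $(\mathbb{R},+,\cdot,<)$ is o-minimal, $H(\mathbb{R})$ is a definable group, and transporting this structure along the Lie isomorphism exhibits $G$ as definable in the o-minimal expansion $(\mathbb{R},+,\cdot,<)$. This direction uses neither nilpotency nor the decomposition theorem.

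For the converse, assume $G$ is a nilpotent real Lie group definable in an o-minimal expansion $\mathcal{R}$ of the real field. First I would record that over the standard reals the definable-group structure and the Lie-group structure agree: a group definable over $\mathbb{R}$ is a Lie group whose definable subgroups are Lie subgroups, whose definably connected components are its topological identity-component cosets, and for which definable compactness coincides with topological compactness. Applying Theorem \ref{first}(1) then yields an internal direct product $G = K \times \mathcal{N}(G)$, with $K$ a maximal abstractly compact subgroup and $\mathcal{N}(G)$ the maximal normal definable torsion-free subgroup (Fact \ref{me-solvable}). Since both factors are normal and a direct product of two linear algebraic groups is again linear algebraic, it suffices to realise each factor as a linear algebraic group over $\mathbb{R}$. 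For $\mathcal{N}(G)$ this is classical Lie theory: being definably connected and torsion-free it is a connected simply connected nilpotent Lie group (it contains no nontrivial torus, so its maximal compact subgroup is trivial and it is contractible), whence the exponential map is a diffeomorphism and, the Lie algebra being nilpotent, the Baker--Campbell--Hausdorff series terminates, making the group law polynomial; thus $\mathcal{N}(G)$ is a unipotent linear algebraic group. For the other factor I would argue that over $\mathbb{R}$ the abstractly compact group $K$ is definably compact, hence topologically compact, so $K$ is a compact nilpotent Lie group, and invoke the classical fact that every compact Lie group carries a structure of real linear algebraic group (via a faithful orthogonal representation with Zariski-closed image). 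Combining the factors, $G = K \times \mathcal{N}(G)$ is Lie isomorphic to a linear algebraic group.

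The step I expect to be the main obstacle is the identification of the abstractly compact factor $K$ with a genuinely compact topological group. One must show that abstract compactness, a purely group-theoretic notion arising from the theory of abstract tori (Definition \ref{abstract-dfn}), specializes over the standard reals to ordinary topological compactness, so that the algebraicity of compact Lie groups can be brought to bear; this is the point where the passage from the abstract decomposition of Theorem \ref{first} to honest real-analytic geometry must be made rigorous. By contrast, the unipotent factor is handled by standard Lie theory and the final assembly of the two algebraic factors into a single linear algebraic group is soft.
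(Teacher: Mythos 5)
Your overall strategy coincides with the paper's: split $G$ into a definable torsion-free factor and a compact factor via the decomposition $G=K\times\mathcal{N}(G)$ of Theorem \ref{first}(1) (the paper runs the connected case through Proposition \ref{decomposition} and then adjoins the finite normal $F$ with $G=F\cdot G^0$, which is exactly how Theorem \ref{first}(1) is obtained), then algebraize each factor separately. Your treatment of $\mathcal{N}(G)$ is a legitimate and arguably more self-contained substitute for the paper's appeal to \cite{COS}: closedness and simple connectedness come from Fact \ref{tfref}, and then Malcev/Baker--Campbell--Hausdorff gives a unipotent algebraic structure. The assembly of the two factors is indeed soft.

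The step you yourself flag as the main obstacle is, however, wrong as you propose to carry it out. The implication ``abstractly compact $\Rightarrow$ definably compact'' over the standard reals is false, and the paper's own Example \ref{str-ex} is a counterexample: there the abstract torus $T=\{(-\ln u,u)\}$ is not semialgebraic, hence not even definable in the ambient structure (the pure real field), while the only definably compact subgroups of that $G$ are finite. Definition \ref{abstract-dfn} does not require $K$ to be definable; it only provides a definable quotient map restricting to an \emph{abstract} isomorphism of $K$ with a definably compact group, and nothing forces that isomorphism to be a homeomorphism onto a closed subgroup. The correct move is topological and bypasses $K$: since $G^0/\mathcal{N}(G^0)$ is a definable torus (Fact \ref{me-solvable}), over $\R$ it is a compact $k$-torus with $k=\dim G-\dim\mathcal{N}(G)$; the maximal compact subgroup $\mathbb{K}$ of the connected nilpotent Lie group $G^0$ is a central torus meeting the contractible closed subgroup $\mathcal{N}(G^0)$ trivially, it contains the torsion subgroup $S$, and the splitting $G^0=\mathcal{N}(G^0)\times T$ (which places $S$ inside $T$) forces the projection to $G^0/\mathcal{N}(G^0)$ to restrict to an isomorphism $\mathbb{K}\cong \SO_2(\R)^k$. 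Hence $G^0=\mathcal{N}(G^0)\times\mathbb{K}$ as Lie groups, and with $K$ replaced by the honestly compact $F\cdot\mathbb{K}$ the remainder of your argument (compact Lie groups are real algebraic, products of algebraic groups are algebraic) goes through. So the proof is salvageable, but the justification you offer for its crucial step must be replaced.
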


A main tool is the o-minimal Euler characteristic $E$, an invariant under definable bijections  that has been used by Strzebonski in \cite{Strzebonski} to develop a theory of definable $p$-groups and definable $p$-Sylow subgroups, extending classical notions and results for finite groups. 
In Section 2 and 3 it is used to show the following equivalent characterizations to nilpotency, well-known for finite groups:
 
 \begin{theo}\label{finite-like}
Let $G$ be a definable group such that $\mathcal{N}(G)$ is nilpotent. 

\begin{enumerate}
\item Assume $E(G) \neq 0$. Then the following are equivalent:

 \begin{enumerate}
\item[(a)] $G$ is nilpotent.

 \item[(b)] $G$ has exactly one $p$-Sylow subgroup for each prime $p$ dividing $E(G)$.

\item[(c)] All $p$-Sylow subgroups of $G$ are normal.


\end{enumerate}

\vs
\item Suppose $E(G) = 0$ and $G = G^0$. Then the following are equivalent:

\begin{enumerate}
\item[(a)] $G$ is nilpotent.

\item[(b)] $G$ has exactly one $0$-Sylow subgroup.

\item[(c)] All $0$-Sylow subgroups of $G$ are normal.


\end{enumerate}

\vs
\item Let $G$ be definably connected. Then the following are equivalent:

\begin{enumerate}
\item[(a)] $G$ is nilpotent.

\item[(b)] Every proper definable $H < G$ is contained properly in its normalizer. 


\end{enumerate}

\end{enumerate}
\end{theo}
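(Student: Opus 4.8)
The plan is to transport the classical finite-group arguments into the definable category, using Strzebonski's Euler-characteristic Sylow theory \cite{Strzebonski} as the engine: existence and conjugacy of $p$-Sylow subgroups (for primes $p\mid E(G)$) and of $0$-Sylow subgroups (when $E(G)=0$), together with the facts that a definable $p$-group is nilpotent and has $E=\pm p^{m}$. Two lemmas glue all three parts together. \emph{Lemma A} (which is exactly $(3)(a)\Rightarrow(b)$, but valid for arbitrary definable $G$): a nilpotent definable group satisfies the normalizer condition. I would prove this via the upper central series $1=Z_0\trianglelefteq\cdots\trianglelefteq Z_n=G$, whose terms are definable; given a proper definable $H<G$, pick the least $i$ with $Z_{i+1}\not\subseteq H$ and note $[Z_{i+1},H]\subseteq Z_i\subseteq H$, so any $g\in Z_{i+1}\setminus H$ lies in $N_G(H)\setminus H$. \emph{Lemma B} (a Frattini argument): for any Sylow subgroup $P$ one has $N_G(N_G(P))=N_G(P)$; this follows verbatim from the finite-group proof once Strzebonski's conjugacy of Sylow subgroups is applied inside the definable group $N_G(P)$.

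Granting these, most implications are short. In parts $(1)$ and $(2)$, $(b)\Leftrightarrow(c)$ is immediate from conjugacy: a Sylow subgroup is normal exactly when it has no other conjugate. For $(a)\Rightarrow(c)$, if $G$ is nilpotent then by Lemma A it satisfies the normalizer condition, so by Lemma B a proper $N_G(P)$ is impossible, forcing $N_G(P)=G$, i.e.\ every Sylow is normal. Part $(3)(a)\Rightarrow(b)$ is Lemma A, and $(3)(b)\Rightarrow(a)$ runs Lemma B in reverse: the normalizer condition forces $N_G(P)=G$ for every (ordinary or $0$-)Sylow $P$, so all Sylows are normal, and then the substantive implication $(c)\Rightarrow(a)$ of parts $(1)$/$(2)$ finishes the argument.

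Everything thus reduces to $(c)\Rightarrow(a)$, where the hypothesis that $\mathcal{N}(G)$ is nilpotent enters. Let $P_1,\dots,P_k$ be the normal $p_i$-Sylow subgroups of $G$ (the case $(2)$, with its $0$-Sylow, being analogous and treated below). For $i\neq j$ the intersection $P_i\cap P_j$ is simultaneously a $p_i$- and a $p_j$-group, hence has $E=\pm1$ and is trivial, so the normal subgroups $P_i$ pairwise commute and $K:=P_1\cdots P_k=P_1\times\cdots\times P_k$ is a normal nilpotent definable subgroup with $E(K)=\pm E(G)$. Since $K$ and the normal subgroup $\mathcal{N}(G)$ meet trivially (a torsion-free $p$-group is trivial), they commute, and $K\times\mathcal{N}(G)$ is a normal nilpotent subgroup. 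The quotient $G/K$ satisfies $E(G/K)=\pm1$, hence is torsion-free and therefore solvable, so $G$ itself is solvable. It remains to prove $G=K\times\mathcal{N}(G)$, equivalently $\dim G=\dim K+\dim\mathcal{N}(G)$.

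This dimension count is the main obstacle. I would obtain it from the structure theory of definably connected solvable groups underlying \cite{me2} and Fact~\ref{me-solvable}: such a $G$ decomposes as $G=C\cdot\mathcal{N}(G)$ with $C$ a maximal abstractly compact subgroup and $\dim G=\dim C+\dim\mathcal{N}(G)$. As $C$ is abstractly compact its Sylow subgroups are exactly the $P_i$, so $C/K$ is an abstractly compact torsion-free group, i.e.\ trivial; hence $K=C$ and the required equality holds, whereupon $G=K\times\mathcal{N}(G)$ is a direct product of nilpotent groups and so nilpotent. Two further points need care along the way: the reduction of parts $(1)$ and $(2)$ from a possibly disconnected $G$ to $G^0$ (controlling the finite group $G/G^0$ and its conjugation action), and, in part $(2)$, verifying that a normal $0$-Sylow behaves as a \emph{central} maximal torus — the precise definable analog of the algebraic fact that a connected group with normal maximal torus is nilpotent — which again yields $G=K\times\mathcal{N}(G)$.
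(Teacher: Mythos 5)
Your overall architecture is sound, and in places cleaner than the paper's. Lemma A is the standard upper-central-series argument (the paper simply quotes this direction as well known), and Lemma B does work for $0$-Sylows as well as $p$-Sylows, since Strzebonski's conjugacy theorem applies inside the definable group $N_G(P)$; consequently your route to $(3)(b)\Rightarrow(a)$ --- Frattini forces the $0$-Sylow to be normal, then quote $(2)(c)\Rightarrow(a)$ --- bypasses the paper's separate treatment of the non-solvable case (the paper instead produces a self-normalizing definable subgroup in the semisimple quotient, via the normalizer of a maximal torus or the preimage of a maximal compact subgroup). Part $(1)(c)\Rightarrow(a)$ also goes through essentially as in the paper: there $K$ is finite, meets the torsion-free $G^0=\mathcal{N}(G)$ trivially, and the \emph{index} count $|K|=|E(G)|=[G:G^0]$ (not the dimension count you propose, which is vacuous for a finite factor) gives $G=K\times\mathcal{N}(G)$.

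The genuine gap is in $(2)(c)\Rightarrow(a)$, on which your part $(3)$ now also depends. A $0$-Sylow $A$ is not a torus and does not meet $\mathcal{N}(G)$ trivially: $A\cap\mathcal{N}(G)=\mathcal{N}(A)$, and Example \ref{str-ex} of the paper is an abelian (hence nilpotent) $0$-group $G$ --- so $G$ is its own unique $0$-Sylow --- with $\mathcal{N}(G)$ one-dimensional. This kills three of your steps at once: the parenthetical ``a torsion-free $p$-group is trivial'' does not apply to $K\cap\mathcal{N}(G)$, which is a torsion-free \emph{subgroup} of a $0$-group rather than a $0$-group itself; the target identity $G=K\times\mathcal{N}(G)$ with $K$ the $0$-Sylow is simply false (in the example it would read $G=G\times\R$); and the identification $K=C$ runs backwards, since the maximal abstractly compact subgroup is a complement $T$ of $\mathcal{N}(A)$ \emph{inside} $A$, so $C\subseteq K$ with $K/C\cong\mathcal{N}(A)$ torsion-free but generally nontrivial. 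The substantive content of $(2)(c)\Rightarrow(a)$ is precisely that normality of $A$ upgrades the semidirect product $G=\mathcal{N}(G)\rtimes T$ of Fact \ref{me-solvable} (which holds for non-nilpotent groups such as $\R^2\rtimes\SO_2(\R)$) to something nilpotent; the paper does this by first deducing solvability (as you do, via $G/A$ torsion-free, or via $G=PH$ from \cite{me2}) and then combining the facts that $G/\mathcal{N}(G)$ is a definable torus, that $\mathcal{N}(A)$ is central, and that $\mathcal{N}(G)$ is nilpotent by hypothesis. Your closing sentence (``verifying that a normal $0$-Sylow behaves as a central maximal torus'') restates this goal rather than proving it, so the proposal as written does not close parts $(2)$ and $(3)$.
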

 
 \noindent
 Note that in Theorem \ref {finite-like} the assumption $\mathcal{N}(G)$ nilpotent is necessary in (1) and (2), as there are torsion-free definable groups that are not nilpotent like, for instance, the centerless semialgebraic group
 \[
 H = \left \{ \begin{pmatrix} 
 a & b \\
 0 & 1
 \end{pmatrix}: a, b \in \R, a > 0 \right \} 
 \]
 
\noindent For any finite nilpotent group $F$, the semialgebraic group $G = H \times F$ 
 satisfies $(1)$ b\&c and the semialgebraic group $G = H \times \SO_2(\R)$ satisfies $(2)$ b\&c. In both cases, $\mathcal{N}(G) = H$. We do not know whether the assumption can be eliminated in $(3)$ (Note that $H$ above does not give us this information for  (3) as the definable subgroup of matrices where $b = 0$ is equal to its normalizer).\\

\noindent
Finally, Section 4 contains a digression on definable abelian torsion-free groups $G$, for which a decomposition  in 1-dimensional definable subgroups  is proved, when $\dim \Aut(G)> 0$. This is related to the problem of characterizing definable groups that are elementarily equivalent to a linear algebraic group of the same dimension.  

Throughout the paper groups are definable \emph{with parameters} in an o-minimal structure \M. We assume \M\ expands a real closed field, since some of the references in the proofs are stated under this assumption, but we believe all results are likely to hold in an arbitrary o-minimal structure.


 \section{Nilpotency and Euler characteristic}

If $\mathcal{P}$ is a cell decomposition of a definable set $X$, \emph{the o-minimal Euler characteristic} $E(X)$ is defined as the number of even-dimensional cells in 
$\mathcal{P}$ minus the number of odd-dimensional cells in $\mathcal{P}$, and it does not depend on $\mathcal{P}$ (see \cite{Lou}, Chapter 4). As points are $0$-dimensional cells, it follows that for finite sets cardinality and Euler characteristic coincide. Moreover, since for every definable sets $A$, $B$ we have that $E(A \times B) = E(A)E(B)$, the following holds:

\begin{fact}\cite{Strzebonski} \label{products} 
Let $K < H < G$ be definable groups. Then
\begin{enumerate}
\item[(a)] $E(G) = E(H)E(G/H)$

\item[(b)] $E(G/K) = E(G/H)E(H/K)$
\end{enumerate}
\end{fact}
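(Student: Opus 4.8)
The plan is to reduce both identities to the product formula $E(A \times B) = E(A)E(B)$ recalled in the paragraph above, by exhibiting explicit definable bijections. The only nontrivial ingredient is a definable section of the relevant quotient map. First recall that for a definable subgroup $H$ of a definable group $G$ the coset space $G/H$ is again a definable set, and that in our setting---an o-minimal expansion of a real closed field---definable choice holds. I would therefore fix a definable section $s \colon G/H \to G$ of the projection $\pi \colon G \to G/H$, that is, a definable map with $\pi \circ s = \id$ and hence $s(gH) \in gH$ for all $g$.

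For part (a), consider
\[
\varphi \colon G \longrightarrow (G/H) \times H, \qquad g \longmapsto \bigl(gH,\, s(gH)^{-1}g\bigr).
\]
This is well defined, since $s(gH)$ and $g$ lie in the same coset, so $s(gH)^{-1}g \in H$; it is definable, because $s$, group multiplication and inversion are; and it is a bijection, with inverse $(aH, h) \mapsto s(aH)\,h$ (a routine check using $s(aH) \in aH$). Applying the product formula to $\varphi$ gives
\[
E(G) = E\bigl((G/H)\times H\bigr) = E(G/H)\,E(H),
\]
which is (a).

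For part (b), I would run the same argument one level up. Since $K \leq H$, the assignment $gK \mapsto gH$ is a well-defined definable projection $G/K \to G/H$, and its fibre over $gH$ is $\{xK : x \in gH\}$, which is in definable bijection with $H/K$. Choosing a definable section of this projection and repeating the construction of $\varphi$ yields a definable bijection $G/K \cong (G/H) \times (H/K)$, whence $E(G/K) = E(G/H)\,E(H/K)$. One could instead attempt to deduce (b) from three applications of (a) to the pairs $K < G$, $H < G$ and $K < H$, but that would force cancelling a factor of $E(K)$ and so fail when $E(K) = 0$; the direct fibration argument sidesteps this entirely.

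The only real obstacle is the definability of the section $s$: granted that, everything reduces to the product formula and to elementary bookkeeping of cosets. In the o-minimal context definable choice supplies exactly this, so no genuine difficulty remains. Indeed, the content of the Fact is precisely that a definable quotient map is, up to definable bijection, a trivial fibration, and it is this triviality that makes $E$ multiplicative across it.
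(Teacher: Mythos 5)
Your proposal is correct and matches the paper's own (very brief) justification: the paper simply observes that the statement follows from the multiplicativity $E(A\times B)=E(A)E(B)$ together with the fact that, by definable choice in an o-minimal expansion of a real closed field, the quotient maps admit definable sections, which is exactly the trivialization $G \cong (G/H)\times H$ and $G/K \cong (G/H)\times(H/K)$ you construct. Your remark that deducing (b) from (a) would require cancelling $E(K)$, which fails when $E(K)=0$, is a worthwhile point the paper does not make explicit.
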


\begin{dfn}\cite{Strzebonski} Let $G$ be a definable group. We say that $G$ is a \emph{$p$-group} if:
\begin{itemize}
\item $p$ is a prime number and for any proper definable $H < G$,
\[
E(G/H) \equiv 0 \quad \mod p
\]

\item $p = 0$ and for any proper definable subgroup $H < G$,
\[
E(G/H) = 0
\]

\end{itemize}

A maximal $p$-subgroup of a definable group $G$ is called \emph{$p$-Sylow}.
\end{dfn}

\begin{fact}\label{fact} \cite{Strzebonski} Let $G$ be a definable group.
\begin{enumerate}
\item If $p$ is a prime dividing $E(G)$, then $G$ contains an element of order $p$. In particular, if $E(G) = 0$ then $G$ has elements of each prime order. Moreover, 
\[
 G \mbox{ is torsion-free } \ \Longleftrightarrow\ E(G) = \pm 1
\]

\item Each $p$-subgroup is contained in a $p$-Sylow, and $p$-Sylows are all conjugate.   

\item If $H$ is a $p$-subgroup of $G$, then 
\[
H \mbox{ is a $p$-Sylow } \Longleftrightarrow\ E(G/H) \neq 0 \mod p
\]


\item If $E(G) = 0$, then $G$ contains a $0$-subgroup.  

\item Every $0$-group is abelian and definably connected.  

\item If $E(G) \neq 0$, then any $p$-subgroup of $G$ is finite.  

\item Let $S \subset G$ be a subset (definable or not). Then there is a smallest definable subgroup $H < G$ containing $S$. We call it the definable subgroup generated by $S$, and we write $H = \langle S \rangle$.
\end{enumerate}
\end{fact}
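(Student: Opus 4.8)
The whole fact rests on a single o-minimal counting principle, which I would isolate first: if a finite group $\Gamma$ of prime order $p$ acts definably on a definable set $X$, then $E(X) \equiv E(X^{\Gamma}) \pmod p$, where $X^{\Gamma}$ is the (definable) fixed-point set. To prove this I would write $X = X^{\Gamma} \sqcup (X \setminus X^{\Gamma})$ and use additivity of $E$ on definable partitions; on the complement the action is free, so the orbit map realizes $X \setminus X^{\Gamma}$ as a definable set over its quotient with every fibre of cardinality $p$, whence $E(X \setminus X^{\Gamma}) = p \cdot E\big((X\setminus X^{\Gamma})/\Gamma\big)$ and the congruence follows. The only delicate point here is the behaviour of $E$ under a free finite definable action, which I would deduce from multiplicativity of $E$ together with definable choice of orbit representatives.

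Granting this principle, part (1) is the o-minimal Cauchy theorem: let $\Gamma = \Z/p\Z$ act by cyclic rotation on $X = \{(g_1,\dots,g_p) \in G^p : g_1\cdots g_p = 1\}$, which is in definable bijection with $G^{p-1}$, so $E(X) = E(G)^{p-1}$. Since $p \mid E(G)$ we get $E(X^{\Gamma}) \equiv 0 \pmod p$; as $X^{\Gamma}$ consists of the diagonal tuples $(g,\dots,g)$ with $g^p = 1$ and contains the identity, it cannot reduce to $\{(1,\dots,1)\}$, producing an element of order $p$. The statement for $E(G) = 0$ is the special case in which every prime divides $E(G)$, and the torsion-free equivalence follows from Fact \ref{products}: a nontrivial element of prime order $p$ generates a finite definable subgroup $H$ of order $p$, forcing $p \mid E(G)$, while conversely $E(G) = \pm 1$ leaves no prime to divide $E(G)$ and excludes $E(G) = 0$. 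Part (6) would follow by noting that an infinite $p$-subgroup contains a one-dimensional definable subgroup $T$; since subgroups of $p$-groups are $p$-groups and a one-dimensional $p$-group cannot be torsion-free, $T$ has torsion and $E(T) = 0$, which by Fact \ref{products} forces $E(G) = 0$.

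For the Sylow statements (2), (3) and for (7) I would first secure existence of maximal $p$-subgroups and of $\langle S \rangle$ from the fact that strictly ascending or descending chains of definable subgroups must stabilise, which in the o-minimal setting holds because dimension and the number of definably connected components cannot change infinitely often; then $\langle S \rangle$ is the intersection of the finitely many relevant definable subgroups containing $S$. Conjugacy in (2) and the criterion (3) I would obtain by letting a $p$-Sylow $P$ act by left translation on the coset space $G/H$ and applying the counting principle: $E(G/H) \equiv E\big((G/H)^{P}\big) \pmod p$, so $E(G/H) \not\equiv 0 \pmod p$ exactly when $P$ fixes a coset, i.e. lies in a conjugate of $H$, and maximality then forces conjugacy. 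For (5), definable connectedness is immediate, since if $G^0 \neq G$ then $G/G^0$ is a nontrivial finite quotient with $E(G/G^0) \neq 0$, contradicting the $0$-group condition; commutativity I would extract by examining the proper definable subgroups, in particular centralizers, and showing a non-abelian $0$-group would admit a proper subgroup of nonzero index. For (4), when $E(G) = 0$ part (1) gives torsion, and I would descend to a one-dimensional definably connected subgroup carrying a torsion element, which is abelian with $E = 0$ and, having only finite proper subgroups, is a $0$-group.

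The main obstacle throughout is the counting principle's quotient step, namely controlling $E$ under free finite definable actions, together with the justification of the chain conditions on definable subgroups; once that infrastructure is in place the classical finite-group Sylow arguments transfer almost verbatim. Since the statement is attributed to \cite{Strzebonski}, this plan essentially reconstructs that development.
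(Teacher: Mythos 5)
The paper offers no proof of Fact~\ref{fact} at all: it is imported wholesale from \cite{Strzebonski}, so your reconstruction can only be judged on its own terms. Your overall architecture is the right one --- Strzebonski's development really is driven by a fixed-point counting lemma for definable actions together with DCC for definable subgroups, and your treatment of part (1) (the McKay-style action of $\Z/p\Z$ on $\{(g_1,\dots,g_p): g_1\cdots g_p=1\}$), of the torsion-free equivalence, of definable connectedness in (5), and of (7) via DCC is sound. But three of your steps have genuine problems. First, the claim ``subgroups of $p$-groups are $p$-groups,'' on which your proof of (6) rests, is false under Definition 2.2: $\SO_2(\R)$ is a $2$-group (every proper definable subgroup is finite, so every proper quotient has Euler characteristic $0$), yet its subgroup $\Z/3\Z$ is not; likewise the $0$-group of Example~\ref{str-ex} contains the torsion-free subgroup $\R\times\{1\}$, which is not a $0$-group. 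So your one-dimensional $T$ inside an infinite $p$-subgroup may well be torsion-free with $E(T)=\pm1$, and the argument collapses. The correct route shows directly that an infinite $p$-group $H$ ($p$ prime) has $E(H)=0$: otherwise $H^0$ is torsion-free, $H$ splits over $H^0$ by a finite subgroup $F$ with $|F|=|E(H)|$ a $p$-power, and then $E(H/F)=\pm1\not\equiv 0 \pmod p$ contradicts the definition.

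Second, your descent in (4) to ``a one-dimensional definably connected subgroup carrying a torsion element'' fails on the paper's own Example~\ref{str-ex}: that group has $E=0$ but its only infinite proper definable subgroup is torsion-free and all its definably compact subgroups are finite, so no such one-dimensional subgroup exists (a one-dimensional definably connected group with torsion is definably compact). The standard argument instead takes, by DCC, a \emph{minimal} definable subgroup $H$ with $E(H)=0$; then every proper definable $K<H$ has $E(K)\neq 0$, whence $E(H/K)=E(H)/E(K)=0$ by Fact~\ref{products}, and $H$ is a $0$-group. Third, you prove the counting principle only for a group of prime order $p$, which suffices for Cauchy but not for the Sylow statements (2) and (3), where the acting group is a possibly infinite $p$-Sylow, nor for anything involving $p=0$; extending the congruence $E(X)\equiv E(X^P)$ to arbitrary $p$-groups and to $0$-groups (via the orbit decomposition, with every non-trivial orbit of the form $P/\mathrm{Stab}$ contributing $0$ modulo $p$, summed over a definably parametrized family of orbits) is the main technical content of \cite{Strzebonski} and is only gestured at in your plan.
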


Given a definable group $G$, we denote by $\mathcal{N}(G)$ the maximal normal definable torsion-free subgroup of $G$ (that exists by Proposition 2.1 in \cite{Conversano-PillayI}). The smallest definable subgroup of finite index in $G$ is denoted by $G^0$ and it exists because of DCC for definable subgroups \cite[Theo 2.6]{Strzebonski}. Moreover, the following hold:  

\begin{fact} \cite {Pillay - groups} \quad
$ 
G \mbox{ is definably connected }  \Longleftrightarrow\ G = G^0.
$
\end{fact}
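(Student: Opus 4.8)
The plan is to treat ``definably connected'' in its topological sense: equipping $G$ with the unique definable manifold topology that makes it a topological group (Pillay's structure theorem), $G$ is definably connected precisely when it admits no proper nonempty definable clopen subset. The subgroup $G^0$ is, by the definition recalled just above, the smallest definable subgroup of finite index, whose existence is guaranteed by the descending chain condition. Both directions will then reduce to understanding the interaction between finite-index definable subgroups and the topological identity component, and the substance of the statement is really the identification of that component with the algebraically defined $G^0$.

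For the implication $G = G^0 \Rightarrow G$ definably connected, I would first observe that the definably connected component $G^{\circ}$ containing the identity $e$ is itself a definable subgroup: since multiplication and inversion are continuous and carry definably connected sets to definably connected sets, the sets $G^{\circ}\cdot G^{\circ}$ and $(G^{\circ})^{-1}$ are definably connected and contain $e$, hence lie in $G^{\circ}$. By o-minimal cell decomposition $G$ has only finitely many definably connected components, and these are exactly the cosets of $G^{\circ}$; thus $G^{\circ}$ has finite index. Minimality of $G^0$ then gives $G^0 \subseteq G^{\circ}$, so $G = G^0 \subseteq G^{\circ} \subseteq G$, forcing $G^{\circ} = G$. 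Hence $G$ has a single definably connected component, i.e. it is definably connected.

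For the converse, suppose $G$ is definably connected. The subgroup $G^0$ has finite index, so its finitely many cosets cover $G$ and each has dimension equal to $\dim G^0$; therefore $\dim G^0 = \dim G$. A definable subgroup of full dimension has nonempty interior and, being a union of translates of a neighborhood of $e$, is open; its cosets are then open as well, so $G^0$ is also closed, hence clopen and nonempty. Definable connectedness leaves only the possibility $G^0 = G$.

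The main obstacle is not the group-theoretic algebra but the o-minimal topology underpinning both directions: that $G$ carries a canonical definable group topology, that finite-index definable subgroups are open because they are full-dimensional, and that there are only finitely many definably connected components with the identity component a definable subgroup. These technical ingredients are supplied by Pillay's structure theorem together with cell decomposition and o-minimal dimension theory; once they are in place, the equivalence follows immediately, since it amounts to the identification of the topological identity component with the smallest finite-index definable subgroup $G^0$.
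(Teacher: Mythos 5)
Your argument is correct and is essentially the standard one from the cited source (Pillay, \emph{On groups and fields definable in o-minimal structures}): the paper itself offers no proof, only the citation, and the content is exactly the identification of the topological identity component of the definable group manifold with the smallest finite-index definable subgroup $G^0$. Both directions as you present them --- that the identity component is a definable subgroup of finite index, and that a finite-index definable subgroup is full-dimensional, hence open, hence clopen --- are the right ingredients and are carried out correctly.
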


\noindent
That is, the definably connected component of the identity in $G$ is a definable subgroup.  As observed in the introduction of \cite{PPSI}:

\begin{fact} \label{tricotomy}
If $G$ is definably connected, then either $E(G) = \pm 1$ (iff $G$ is torsion-free) or $E(G) = 0$.
\end{fact}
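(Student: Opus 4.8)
The plan is to separate the two assertions. The equivalence $E(G)=\pm 1$ if and only if $G$ is torsion-free is exactly Fact~\ref{fact}(1) and uses no connectedness, so the genuine content is to exclude every value other than $-1,0,1$. Concretely, I would prove: if $G$ is definably connected and \emph{not} torsion-free, then $E(G)=0$. This is the only thing connectedness is needed for.

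The first observation is that a nontrivial $0$-group $S$ automatically has $E(S)=0$: taking the proper subgroup $H=\{e\}$ in the definition of a $0$-group gives $E(S)=E(S/\{e\})=0$. Combined with multiplicativity, this is decisive: as soon as $G$ contains \emph{any} nontrivial $0$-subgroup $S$, Fact~\ref{products}(a) (applied with $H=S$) yields $E(G)=E(S)\,E(G/S)=0$, and we are done. Thus the whole statement reduces to a single structural claim: a definably connected group that is not torsion-free contains a nontrivial $0$-subgroup, that is, a positive-dimensional definable abelian divisible subgroup (an \emph{abstract torus}). To turn torsion into such a subgroup I would pass to a suitable infinite abelian subgroup $A$ carrying the torsion and apply the abelian decomposition $A=T\times\mathcal{N}(A)$ of \cite{me2} (Fact~\ref{me-solvable}): since $\mathcal{N}(A)$ is torsion-free, all torsion of $A$ lies in $T$, so $T$ is nontrivial; and a nontrivial abstract torus is definably connected, definably compact and divisible, hence a $0$-group, providing the subgroup $S$ required above.

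The main obstacle is precisely this last step, namely guaranteeing that the \emph{mere presence} of torsion in a definably connected group produces a \emph{positive-dimensional} abstract torus rather than only a finite subgroup. A single torsion element $g$ of prime order generates, via Fact~\ref{fact}(7), only a finite $p$-subgroup, so torsion by itself is not enough; definable connectedness must be used to promote isolated torsion into a positive-dimensional definably compact piece. This is a genuine structural input about o-minimal groups — that torsion cannot remain isolated in a connected group but sits inside a maximal definably compact subgroup / abstract torus — and it is exactly the point at which one invokes the cited structure theory. Once that input is granted, the reduction above closes the argument at once through Fact~\ref{products}(a). (An alternative, equivalent route would be to use a definable Cartan-type decomposition $G\approx K\times\mathbb{R}^{n}$ with $K$ a maximal definably compact subgroup, so that $E(G)=E(K)\,(\pm 1)$, and then note $E(K)=0$ exactly when $K$ has positive dimension, i.e. when $G$ is not torsion-free; this shifts the same structural burden onto the decomposition and onto the vanishing of $E$ for positive-dimensional definably compact connected groups.)
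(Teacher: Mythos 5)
The paper does not actually prove this Fact: it is quoted verbatim from the introduction of \cite{PPSI}, where it is derived from Strzebonski's torsion results, so there is no internal argument to compare against. Judged on its own, your reduction is the right one and is cleanly executed: a nontrivial $0$-group $S$ has $E(S)=0$ (take $H=\{e\}$ in the definition), and Fact~\ref{products}(a) then kills $E(G)$; so everything hinges on producing a nontrivial $0$-subgroup from the mere presence of torsion. But that production step, which you correctly identify as the crux, is exactly the entire content of the Fact, and none of the routes you sketch closes it. First, you never say how to find the ``suitable infinite abelian subgroup $A$ carrying the torsion'': by Fact~\ref{fact}(7) a torsion element $g$ generates only a finite definable subgroup, $C_G(g)$ need not be abelian, and nothing quoted in the paper places $g$ inside an infinite definably connected abelian (or solvable) definable subgroup when $G$ is not itself solvable. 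Second, your assertion that a nontrivial abstract torus is definably compact is false --- Example~\ref{str-ex} is precisely an abstract torus sitting inside a group with no infinite definably compact subgroup --- and the same example refutes your parenthetical alternative via a decomposition $G\approx K\times\R^{n}$ with $K$ maximal definably compact, since such a $K$ need not exist. What would actually work in the solvable case is to use the $0$-Sylow $A$ of Fact~\ref{me-solvable} directly (it is a $0$-group by definition, and $G=\mathcal{N}(G)\cdot A$ forces $A\neq\{e\}$ when $G$ is not torsion-free), not the complement $T$.

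There is also a circularity problem with leaning on Fact~\ref{me-solvable}: the structure theory of \cite{me2} is built on top of the Strzebonski/Peterzil--Pillay--Starchenko foundations, of which this trichotomy is a basic ingredient (the paper itself silently uses $E(G^0)=\pm1$ inside Lemma~\ref{p-group}), so invoking it here inverts the logical order of the subject even if it is formally available as a black box. The standard elementary argument goes the other way: in a divisible definable abelian group an element of prime order $q$ yields elements of order $q^{n}$ for every $n$ (take successive $q$-th roots), hence finite definable cyclic subgroups $H_n$ with $|H_n|=q^{n}$, and Fact~\ref{products}(a) gives $q^{n}\mid E(G)$ for all $n$, forcing $E(G)=0$; the general definably connected case is reduced to this in \cite{Strzebonski}. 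As written, your proposal is an accurate map of where the difficulty lies rather than a proof of it.
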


\begin{fact} \label{tfref}\cite{PeSta} Let $G$ be a definable torsion-free group. 

\begin{enumerate}
\item $G$ is definably connected and solvable.

\item $G$ is definably contractible. Namely, there is a definable homotopy \\
$\mathcal{H} \colon G \times [0, 1] \to G$ between the identity map on $G$ and the function $G \to G$ taking the constant value $e \in G$.   
\end{enumerate}
\end{fact}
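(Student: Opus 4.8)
The plan is to treat the two assertions separately, deriving definable connectedness and solvability from the multiplicativity of the Euler characteristic together with the classification of definably simple groups, and then obtaining contractibility by an induction that reduces to the one-dimensional case.

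For definable connectedness I would argue by Euler characteristic. Since $G^0$ has finite index $n = [G:G^0]$ and $G^0 \leq G$ is again torsion-free, Fact \ref{fact}(1) gives $E(G) = \pm 1$ and $E(G^0) = \pm 1$, while $E(G/G^0) = n$ because $G/G^0$ is a finite set. By Fact \ref{products}(a), $\pm 1 = E(G) = E(G^0)\,E(G/G^0) = \pm n$, forcing $n = 1$, i.e.\ $G = G^0$. The same computation shows more generally that for any definable normal $N \triangleleft G$ the quotient is again torsion-free: $N$ is torsion-free, so $E(N) = \pm 1$, whence $E(G/N) = E(G)/E(N) = \pm 1$ and Fact \ref{fact}(1) applies to $G/N$. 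I will use this closure of torsion-freeness under definable quotients repeatedly.

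For solvability, let $R(G)$ be the solvable radical, i.e.\ the maximal normal definable solvable subgroup, which exists by DCC for definable subgroups. Since $G$ is definably connected, so is $G/R(G)$, and by the remark above it is torsion-free; by the structure theory of o-minimal groups it is also semisimple. But an infinite definably connected semisimple group is definably isomorphic to the $R$-points of a semisimple algebraic group over a real closed field (Peterzil--Pillay--Starchenko), and such a group always contains elements of finite order. As $G/R(G)$ is torsion-free it must therefore be trivial, so $G = R(G)$ is solvable. This classification step is the main obstacle: the manipulations with $E$ are bookkeeping, but ruling out a torsion-free semisimple section genuinely requires the identification of infinite definably simple groups with semialgebraic simple groups, which carry torsion.

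For definable contractibility I would induct on $\dim G$. A one-dimensional torsion-free definably connected group is definably isomorphic to $(\R,+)$, contracted to the identity by the definable homotopy $(x,t) \mapsto (1-t)x$. In general, using that $G$ is now known to be definably connected and solvable, choose a definable normal subgroup $N \triangleleft G$ with $0 < \dim N < \dim G$ (for instance a nontrivial term of the derived series, or a one-dimensional subgroup of its last abelian term made normal); both $N$ and $G/N$ are torsion-free of strictly smaller dimension, hence definably contractible by the inductive hypothesis. Since \M\ expands a real closed field there is a definable section $s \colon G/N \to G$ of the quotient map, which we may normalize so that $s$ sends the identity coset to $e$; then $(n,\bar g) \mapsto n\cdot s(\bar g)$ is a definable homeomorphism $N \times (G/N) \xrightarrow{\ \sim\ } G$. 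Transporting the product of the two contractions through this homeomorphism contracts $G$ to $e$, the only technical care being to keep the endpoint at $e$, which the normalized section guarantees. Iterating the construction shows in fact that $G$ is definably homeomorphic to $\R^{\dim G}$, recovering both contractibility and the expected Euclidean shape.
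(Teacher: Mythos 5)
This statement appears in the paper only as a quoted Fact with a citation to [PeSta]; no proof is given there, so I am judging your argument on its own terms. Your part (1) is essentially right: the Euler-characteristic computation forcing $[G:G^0]=1$, the observation that torsion-freeness passes to definable quotients via $E(G/N)=E(G)/E(N)=\pm 1$, and the elimination of a semisimple quotient by appealing to the Peterzil--Pillay--Starchenko classification (an infinite definably connected semisimple group always has torsion) is the standard route, and you correctly identify the classification step as the one carrying real weight.

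Part (2), however, has a genuine gap at the step ``$(n,\bar g)\mapsto n\cdot s(\bar g)$ is a definable homeomorphism.'' Definable choice in an o-minimal expansion of a real closed field gives you a definable section $s\colon G/N\to G$ of the quotient map, but not a \emph{continuous} one, and both the claimed homeomorphism and the transported homotopy need continuity of $s$: a homotopy is by definition continuous, and if $s$ jumps then so does $\mathcal{H}_1(g,t)=\mathcal{H}_N\bigl(g\cdot s(\pi(g))^{-1},t\bigr)\cdot s(\pi(g))$. Producing a continuous definable section here --- equivalently, showing that these group extensions are definably topologically trivial --- is exactly the hard content of the Peterzil--Starchenko theorem, which they establish by a careful induction through codimension-one normal subgroups, using o-minimal trivialization and the group structure to glue local sections and conclude that $G$ is definably homeomorphic to the $(\dim G)$-fold cartesian power of the line. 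It cannot be obtained for free from ``$\mathcal{M}$ expands a real closed field.'' Two smaller repairable points: a $1$-dimensional torsion-free definably connected group need not be definably \emph{isomorphic} to $(\R,+)$ (consider $(\R^{>0},\cdot)$ in the pure real field); it is only definably homeomorphic to an interval, which is all the base case needs. And the existence of a proper infinite definable normal subgroup requires an argument, since the terms of the derived series are not obviously definable; this can be fixed by taking the definable subgroup generated by the last nontrivial derived term (it lies inside its own centralizer, hence is abelian, definable and normal) and invoking Peterzil--Steinhorn in the abelian case.
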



We first consider the case where $E(G)\neq 0$.
Because of Fact \ref{tricotomy} and Fact \ref{fact}(1), if $E(G) \notin \{-1, 0, 1\}$ then $G$ is not definably connected and $G^0 = \mathcal{N}(G)$ is torsion-free.
 
%
 
 \begin{lem} \label{p-group}
 Let $G$ be a definable group such that $|E(G)| = p^a$, for some $p$ prime. Then any $p$-Sylow subgroup $H$ of $G$ has order $p^a$, $H$ is definably isomorphic to $G/G^0$ and $G = G^0 \rtimes H$. 
 \end{lem}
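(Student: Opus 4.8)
The plan is to reduce everything to the short exact sequence relating $G$, its component $G^0 = \mathcal{N}(G)$, and the finite quotient $G/G^0$, and then to pin down the order of a $p$-Sylow $H$ by a single Euler characteristic computation.

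First I would record the structure of $G$. Since $|E(G)| = p^a$ we have $E(G) \neq 0$, so by Fact \ref{tricotomy} together with Fact \ref{fact}(1) either $E(G) = \pm 1$ (the case $a = 0$, where $G$ is torsion-free, hence definably connected with $G = G^0$, and all three claims are immediate with $H = \{e\}$ the only $p$-subgroup), or $E(G) \notin \{-1,0,1\}$. In the latter case the observation preceding the lemma gives that $G$ is not definably connected and that $G^0 = \mathcal{N}(G)$ is torsion-free; in particular $E(G^0) = \pm 1$ by Fact \ref{fact}(1). The quotient $G/G^0$ is finite, so $E(G/G^0) = |G/G^0|$, and Fact \ref{products}(a) yields $|G/G^0| = |E(G)|/|E(G^0)| = p^a$. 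Thus $G/G^0$ is a finite $p$-group.

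Next I would locate $H$ inside this picture. A $p$-Sylow $H$ exists, since the trivial subgroup is a $p$-subgroup and Fact \ref{fact}(2) applies, and $H$ is finite by Fact \ref{fact}(6). As $G^0$ is torsion-free and $H$ is finite, $H \cap G^0 = \{e\}$, so the restriction to $H$ of the projection $\pi \colon G \to G/G^0$ is injective and $|H| = p^b$ for some $b \leq a$ (a finite $p$-subgroup in Strzebonski's sense has $p$-power order). The heart of the argument is then the order computation: applying Fact \ref{products}(a) to the definable coset space $G/H$ gives $E(G/H) = E(G)/E(H) = \pm p^{a-b}$, since $E(H) = |H| = p^b$. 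Because $H$ is $p$-Sylow, Fact \ref{fact}(3) forces $E(G/H) \not\equiv 0 \bmod p$, i.e.\ $a = b$, so $|H| = p^a = |G/G^0|$. Hence $\pi|_H \colon H \to G/G^0$ is a bijection, and therefore a definable group isomorphism. Finally, $G^0$ is normal in $G$, $H \cap G^0 = \{e\}$, and $[G : G^0] = |H|$ forces $G^0 H = G$, giving the internal semidirect decomposition $G = G^0 \rtimes H$.

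The only delicate points I anticipate are, first, that Fact \ref{products}(a) is being invoked for the coset space $G/H$ with $H$ possibly non-normal, which is exactly its scope, and second, the elementary but essential remark that a finite definable $p$-subgroup has order a power of $p$; both are routine. The genuinely load-bearing step is the identity $E(G/H) = \pm p^{a-b}$, which converts the maximality of the $p$-Sylow into the numerical equality $b = a$ and thereby simultaneously yields the order of $H$, the isomorphism $H \cong G/G^0$, and the splitting.
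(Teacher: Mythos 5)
Your proposal is correct and follows essentially the same route as the paper: compute $|G/G^0| = p^a$ from $E(G) = E(G^0)E(G/G^0)$ with $E(G^0) = \pm 1$, use Fact \ref{fact}(3) on $E(G/H)$ to force $|H| = p^a$, and conclude the splitting from $H \cap G^0 = \{e\}$ together with the order count. The only cosmetic difference is that the paper deduces $E(G/H) = \pm 1$ directly (since it divides $\pm p^a$ and is prime to $p$) rather than first writing $|H| = p^b$ and arguing $b = a$; both are the same computation.
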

 
 \begin{proof}
 As $E(G) = E(G^0) E(G/G^0)$ and $E(G^0) = \pm 1$, it follows that $|E(G)| = E(G/G^0) = |G/G^0| = p^a$, as $G/G^0$ is a finite group. 
 
Let $H$ be a $p$-Sylow subgroup of $G$. By Fact \ref{fact}(3) we know that $E(G/H) \neq 0  \mod p$, thus $E(G/H) = \pm 1$. So $E(H) = |H| = |E(G)| = p^a$. Moreover, $G^0$ and $H$ have trivial intersection, as $G^0$ is torsion-free and $H$ is finite. Therefore $G = G^0 \rtimes H$, as wanted.
\end{proof}
 
 \begin{rem} \label{semidirect}

The semidirect product may be not direct. E.g.,  $G = \R \rtimes \Z_2$ (where $\Z_2 = \{\pm 1\}$ acts on \R\ by multiplication) is a centerless group with $E(G) = -2$.
 
 \end{rem}
 
 But when $G$ is nilpotent, much more can be said:
 
\begin{prop} \label{str-nilpotent}
 Let $G$ be a nilpotent definable group such that $E(G) \neq 0$. Then
 \begin{enumerate}
 \item the center $Z(G)$ is infinite whenever $G$ is infinite;
 
 \item for each $p$ prime dividing $|E(G)|$, $G$ has exactly one $p$-Sylow subgroup;
 
\item $G = F \times \mathcal{N}(G)$, where $F$ is the direct product of the (unique) $p$-Sylow subgroups of $G$. 
 \end{enumerate}
 \end{prop}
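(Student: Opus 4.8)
The plan is to reduce everything to the structure of the torsion elements of $G$. First I would record the standing reductions already visible in the text: since $E(G)\neq 0$, the subgroup $G^0=\mathcal N(G)$ is torsion-free and definably connected (Fact \ref{fact}(1), Fact \ref{tricotomy}, and the remark preceding Lemma \ref{p-group}), while $\bar G:=G/\mathcal N(G)$ is a finite nilpotent group of order $|E(G)|$ (this covers the degenerate case $E(G)=\pm 1$, where $G=\mathcal N(G)$ and $\bar G$ is trivial). The key classical input is that in \emph{any} nilpotent group the torsion elements form a subgroup $T$, which decomposes as the direct product $T=\prod_p T_p$ of its primary components, each $T_p$ (the set of elements of $p$-power order) being characteristic. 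Because $\mathcal N(G)$ is torsion-free we have $T\cap\mathcal N(G)=\{e\}$, so $T$ embeds into the finite group $\bar G$ and is therefore finite; in particular each $T_p$ is an honest finite $p$-group.

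For part $(2)$ I would argue that $T_p$ is the unique $p$-Sylow. Every $p$-subgroup of $G$ is finite by Fact \ref{fact}(6), hence consists of elements of $p$-power order and lies in $T_p$; conversely $T_p$ is itself a $p$-subgroup, since a finite group of $p$-power order is a $p$-group in the sense of Strzebonski (every proper subgroup has index divisible by $p$). Thus $T_p$ contains every $p$-subgroup, so it is the unique maximal one.

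For part $(3)$ I would set $F:=T=\prod_p T_p$, which is characteristic in $G$, hence normal, and meets $\mathcal N(G)$ trivially. The crux is to show $F\cdot\mathcal N(G)=G$, equivalently that the embedding $T\hookrightarrow\bar G$ is onto, i.e.\ $|T|=|E(G)|$. Here I would invoke Strzebonski's Sylow theory: if $S=T_p$ is the $p$-Sylow then $E(G)=|S|\,E(G/S)$ with $E(G/S)\not\equiv 0\pmod p$ by Fact \ref{fact}(3), which forces $|T_p|$ to equal the full $p$-part of $|E(G)|$ (alternatively one reads this off Lemma \ref{p-group} applied to the preimage of the Sylow $p$-subgroup of $\bar G$). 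Multiplying over the relevant primes gives $|T|=|E(G)|=|\bar G|$, so $T$ surjects onto $\bar G$. Now $F$ and $\mathcal N(G)$ are two normal subgroups with trivial intersection whose product is $G$; they therefore centralize each other, and $G=F\times\mathcal N(G)$ as required.

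Finally, part $(1)$ drops out: when $G$ is infinite, $\mathcal N(G)$ is infinite (it has finite index), and as a nontrivial torsion-free nilpotent group its center $Z(\mathcal N(G))$ is nontrivial, hence infinite; from $G=F\times\mathcal N(G)$ one gets $Z(G)=Z(F)\times Z(\mathcal N(G))$, which is infinite. (Independently of $(3)$ one can also see $Z(\mathcal N(G))\subseteq Z(G)$ directly: $A:=Z(\mathcal N(G))$ is a torsion-free abelian normal subgroup on which $\bar G$ acts by conjugation, and nilpotency of $G$ makes this action unipotent, so each finite-order automorphism acts trivially on $A\otimes\Q$ and hence on $A$.) The main obstacle is precisely the surjectivity $T\twoheadrightarrow\bar G$ in $(3)$: a priori $T$ only injects into $\bar G$, and an extension of a torsion-free group by a finite group need not split, so nilpotency alone does not produce a complement. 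This is exactly where the Euler-characteristic Sylow theory is indispensable, pinning down the order of each $p$-Sylow and thereby forcing the torsion subgroup to be a full complement of $\mathcal N(G)$.
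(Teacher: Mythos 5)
Your proof is correct, but it takes a genuinely different route from the paper. The paper proves all three statements simultaneously by induction on $\dim G + |E(G)|$, with a case analysis on the center: first ruling out a finite center, then splitting into the cases $Z(G)^0 = G^0$ (handled via Lemma \ref{p-group} and the primary decomposition of the finite nilpotent group $G/G^0$) and $Z(G)^0 \subsetneq G^0$ (handled by passing to $G/Z(G)^0$ and pulling back). You instead argue directly, importing the classical fact that in an abstract nilpotent group the torsion elements form a characteristic subgroup $T = \prod_p T_p$ with primary decomposition; since $\mathcal{N}(G) = G^0$ is torsion-free of finite index, $T$ is finite, each $T_p$ is visibly the unique $p$-Sylow, and the only o-minimal input needed is the Euler-characteristic computation $E(G) = |T_p|\,E(G/T_p)$ with $E(G/T_p) \not\equiv 0 \pmod p$, which pins $|T_p|$ to the exact $p$-part of $|E(G)|$ and forces $|T| = |E(G)| = |G/\mathcal{N}(G)|$, hence the splitting; part (1) then falls out of the decomposition rather than being an inductive pivot. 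Your route is shorter and isolates cleanly where Strzebonski's Sylow theory is indispensable (the surjectivity of $T \to G/\mathcal{N}(G)$, i.e.\ that the extension splits), at the cost of invoking an external structure theorem on torsion in nilpotent groups; the paper's induction is longer but self-contained within the definable category and produces the center statement (1) as an integral step rather than a corollary. One point worth making explicit in your write-up is the identification of finite Strzebonski $p$-subgroups with classical finite $p$-groups (if $H$ is finite and not a $p$-group classically, its classical Sylow $p$-subgroup is a proper subgroup of index prime to $p$); you use this tacitly when placing every $p$-subgroup inside $T_p$, and it is true, but it deserves a sentence.
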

 
 \begin{proof}
If $G$ is finite, then $\mathcal{N}(G) = \{e\}$, and $(2)$ and $(3)$ are well-known. So let $G$ be infinite with $\dim G = n > 0$ and $|E(G)| = m = p_1^{a_1} \cdots p_k^{a_k}$. We will prove the three statements by induction on $n + m$. 

Suppose, for a contradiction, that $Z = Z(G)$ is finite of cardinality $r$. Then $G/Z$ is a nilpotent group of dimension $n$ and Euler characteristic 
$m/r < m$. By induction, $G/Z = F' \times N'$, where $N' = \mathcal{N}(G/Z)$ and $F'$ is the direct product 
of its unique $p$-Sylow subgroups. Let now $F$ be the pull-back in $G$ of $F'$. This is a finite nilpotent group so it is the direct product of its unique $p$-Sylow subgroups and $G = \mathcal{N}(G) \times F$. However this implies that the infinite center of $\mathcal{N}(G)$ is included in the center of $G$ that was assumed to be finite, contradiction. So $Z(G)$ is infinite and $(1)$ holds.  
 
Now assume $Z(G)^0 = G^0$. If $k = 1$ and $|E(G)| = p^a$, then by Lemma \ref{p-group} we know that $G = G^0 \rtimes G/G^0$. But as $Z(G)^0 = G^0$, the product is direct and $G$ has exactly one $p$-Sylow subgroup. 

Suppose $k > 1$. As $G/G^0$ is a finite nilpotent group, it is the direct product of its (unique) $p_i$-Sylow subgroups $H_1, \dots, H_k$. Let $K_1 < G$ be the pull-back of the product of the first $k-1$ factors, and $K_2$  be the pull-back of $H_k$.
By induction $K_1 = G^0 \times F_1 \times \cdots \times F_{k-1}$ and $K_2 = G^0 \times F_k$, where each $F_i$ is the unique $p_i$-Sylow in $G$ (therefore normal) and $F$ in $(3)$ is the product $F_1 \times \cdots \times F_k$.

Finally, assume $Z = Z(G)^0 \subsetneq G^0$ and let $G_1 = G/Z$. As $E(G) \neq 0$, then $G^0 = \mathcal{N}(G)$ is torsion-free and $E(Z) = \pm 1$. 
Then $|E(G_1)| = |E(G)| = m$ and $\dim G_1 < \dim G$.
By induction $G_1 = F' \times G_1^0$, where $F'$ is the direct product of its (unique) $p_i$-Sylow subgroups ($i = 1, \dots, k$). Let now $K$ be the pull-back in $G$ of $F'$.
Then, by the previous case, $K = Z(G)^0 \times F$. As $G/K = G_1^0$ is torsion-free, all $p$-subgroups of $G$ are contained in $K$, so $(2)$ and $(3)$ hold.  
 \end{proof}

\begin{rem} In the proposition above,
$G$ nilpotent is an essential assumption for all three conditions. For conditions (2) and (3), we have already noticed this in Remark \ref{semidirect}. For condition (1), it is enough to consider a definable centerless torsion-free group, such as  $\R \rtimes \R^{>0}$.
\end{rem}
 

We can now show the first part of Theorem \ref{finite-like}:

\begin{proof} [Proof of Theorem  \ref{finite-like}(1)] Suppose $G$ is a definable group such that $E(G) \neq 0$.
\begin{enumerate}
\item[$(a) \Rightarrow (b)$] If $G$ is nilpotent, then by Proposition \ref{str-nilpotent}, $G$ has exactly one $p$-Sylow subgroup for each $p$ prime dividing $E(G)$.

\item [$(b) \Rightarrow (c)$] Obvious. 

\item [$(c) \Rightarrow (a)$] Suppose all $p$-Sylow subgroups of $G$ are normal, and let $H$ be their product. Clearly $H$ is a normal subgroup of $G$ and $\mathcal{N}(G) \cap H = \{e\}$, since all $p$-subgroups of $G$ are finite by Fact 2.3(4). Therefore $G = H \times \mathcal{N}(G)$.
As finite $p$-groups are nilpotent and we are assuming $\mathcal{N}(G)$ is nilpotent, it follows that $G$ is nilpotent as well.
\end{enumerate} \vspace{-.5cm}
\end{proof}

We now consider the case where $E(G) = 0$. It is well-known that $G$ may have no maximal \emph{definably compact} subgroup (for instance, see Example
\ref{str-ex} from \cite{Strzebonski}). However, by Theorem 1.5 in \cite{me2}, if $G$ is definably connected then $G$ always has  maximal \emph{abstractly compact} subgroups, all conjugate (if definable). We will show that when $G$ is nilpotent (definably connected or not), then maximal abstractly compact subgroups of $G$ are a direct complement of $\mathcal{N}(G)$. 
 


\begin{dfn} \label{abstract-dfn}
Let $G$ be a definable group. We say that a subgroup $K < G$ is \emph{abstractly compact} if there is a definable homomorphism $G \to G_1$ ($G_1$ definable group)  whose restriction to $K$ is an isomorphism with a definably compact definable subgroup of $G_1$.
In other words, there is a definable normal subgroup $N$ of $G$ and a definably compact subgroup $K'$ of $G/N$, whose pull-back in $G$ is $N \rtimes K$. 
%

We call $K$ an \emph{abstract torus} when $K'$ is a definable torus (that is, abelian, definably connected and definably compact).
%
\end{dfn}

 \begin{ex} \label{str-ex} (\cite[5.3]{Strzebonski}) Let $\M$ be the real field and $G = \R \times [1, e[$ with the operation defined by
 \[
 (x, u) \ast (y, v) = 
 \begin{cases}
 (x + y, uv)  & \mbox{ if $uv < e$} \\
 (x+y+1, uv/e) & \mbox{ otherwise}
 \end{cases}
 \]
 
 \vspace{0.2cm} \noindent
 $(G, \ast)$ is a $0$-group with $\mathcal{N}(G) = \R \times \{1\}$. The only definably compact subgroups of $G$ are finite. The subgroup $T = \{(-\ln u, u): u \in [1, e[\}$ is an abstract torus isomorphic to the definable torus $([1, e[, \otimes)$, where $\otimes$ denotes the multiplication$\mod e$, via the canonical projection $\pi \colon G \to G/\mathcal{N}(G)$.
 \end{ex}

\begin{fact}\label{me-solvable} \cite{me2} Let $G$ be a solvable definably connected definable group. Then
\begin{itemize}
\item $G/\mathcal{N}(G)$ is a definable torus (and the maximal definable quotient of $G$ that is definably compact).

\item For any $0$-Sylow $A$ of $G$, $G = \mathcal{N}(G) \cdot A = \mathcal{N}(G) \rtimes T$, where $T$ is any direct complement of $\mathcal{N}(A)$ in $A$. 
\end{itemize}

In particular, if $G$ is abelian, then $A$ is unique (Fact \ref{fact}(2)) and $G = \mathcal{N}(G) \times T$. 
\end{fact}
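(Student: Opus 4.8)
The plan is to reduce the whole statement to the abelian case and, within that, to the single assertion that $Q := G/\mathcal{N}(G)$ is a definable torus (write $\pi\colon G\to Q$ for the quotient map); the complement is then extracted from a $0$-Sylow. If $G$ is torsion-free then $\mathcal{N}(G)=G$, $Q$ is trivial, and the only $0$-subgroup of $G$ is trivial (a $0$-group has Euler characteristic $0$, whereas every definable subgroup of a torsion-free group has $E=\pm1$ by Fact \ref{fact}(1)), so the assertions hold vacuously. Hence I assume $G$ is not torsion-free, so that $E(G)=0$ by Fact \ref{tricotomy}.

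\emph{That $Q$ is a definable torus.} I would first check $\mathcal{N}(Q)=\{e\}$: the pullback in $G$ of a nontrivial definable torsion-free normal subgroup of $Q$ would be a definable normal subgroup which is an extension of a torsion-free group by the torsion-free group $\mathcal{N}(G)$, hence itself torsion-free (an element of finite order maps to a finite-order, thus trivial, element and so lies in $\mathcal{N}(G)$), contradicting the maximality of $\mathcal{N}(G)$. I would then invoke two structural inputs: that a definably connected solvable group with trivial $\mathcal{N}$ is definably compact, and the o-minimal analogue of the Lie-theoretic fact that a definably compact definably connected solvable group is abelian. Together with definable connectedness of $Q$ (a quotient of a definably connected group) this makes $Q$ abelian, definably connected and definably compact, i.e. a definable torus in the sense of Definition \ref{abstract-dfn}. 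For maximality among definably compact quotients: if $N\triangleleft G$ is definable with $G/N$ definably compact, then $\mathcal{N}(G)N/N\cong \mathcal{N}(G)/(\mathcal{N}(G)\cap N)$ is a quotient of a torsion-free group, hence torsion-free (its Euler characteristic is $\pm1$ by Fact \ref{products} and Fact \ref{fact}(1)); since a definably compact group has no nontrivial torsion-free definable subgroup (such a subgroup would be definably compact, definably connected and have $E=\pm1$, impossible in positive dimension), we get $\mathcal{N}(G)\subseteq N$, so $G/N$ factors through $Q$.

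\emph{The complement.} I would first settle the abelian case and then reduce the solvable case to it. Let $A$ be a $0$-Sylow of $G$; by Fact \ref{fact}(5) it is abelian and definably connected, and $Q$ is itself a $0$-group (each proper definable subgroup has strictly smaller dimension, so each proper quotient is a positive-dimensional definably compact definably connected group and has $E=0$), whose unique $0$-Sylow is $Q$. The key point is that $A$ surjects onto $Q$: the image $\pi(A)$ is a $0$-subgroup of $Q$, and a maximality argument (if $\pi(A)$ were proper one could, passing to the preimage of $\pi(A)$ times a one-dimensional subtorus and using induction on dimension, enlarge $A$ inside $G$) shows $\pi(A)=Q$, whence $G=\mathcal{N}(G)\cdot A$. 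Applying the abelian case to $A$ gives a definable torus $T\leq A$ with $A=\mathcal{N}(A)\times T$; since $\pi(\mathcal{N}(A))$ is torsion-free it is trivial in the torus $Q$, so $\mathcal{N}(A)=A\cap\mathcal{N}(G)$, while $T\cap\mathcal{N}(G)$ is a torsion-free subgroup of the torus $T$ and hence trivial, so $\pi|_T\colon T\to Q$ is an isomorphism and $G=\mathcal{N}(G)\rtimes T$. When $G$ is abelian, $A$ is unique by Fact \ref{fact}(2), $T$ is normal, and the product is direct.

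\emph{Main obstacle.} The genuinely hard inputs are the two structural facts in the torus step — that triviality of $\mathcal{N}$ characterizes definable compactness, and that a definably compact definably connected solvable group is abelian — together with the definable splitting used to produce the complementary torus $T$ inside the abelian group $A$. The latter is the technical core: it rests on the unique divisibility of the torsion-free part, so that the preimage in $A$ of a torsion point of $Q$ is a coset of $\mathcal{N}(A)$ containing a unique torsion point; one must lift the torsion of $Q$ coherently and definably to a definable torus and prove it is a direct complement of $\mathcal{N}(A)$.
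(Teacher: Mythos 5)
This statement is a \emph{Fact} imported verbatim from \cite{me2}; the paper offers no proof of it, so there is no internal argument to compare yours against. Judged on its own terms, your skeleton is the natural one: reduce to $E(G)=0$, show $\mathcal{N}(G/\mathcal{N}(G))$ is trivial, then invoke the two structural inputs (triviality of $\mathcal{N}$ forces definable compactness for solvable definably connected groups; definably compact, solvable and definably connected forces abelian, the latter being \cite{Peterzil-Starchenko1}). But note that the first of these inputs \emph{is} the theorem: ``$G/\mathcal{N}(G)$ is definably compact'' and ``$\mathcal{N}(Q)=\{e\}$ implies $Q$ definably compact'' are the same statement, so your first bullet is a reformulation, not a proof --- the real content lives in \cite{me2} and \cite{Conversano-PillayI}, and you have black-boxed it. Your maximality-of-the-quotient argument is fine, and so is the conclusion $G=\mathcal{N}(G)\cdot A$, though a cleaner route than ``enlarging $A$'' is available: $E(G/A)\neq 0$ because $A$ is a $0$-Sylow, hence $E\bigl(G/\pi^{-1}(\pi(A))\bigr)\neq 0$ by Fact \ref{products}(b), hence $Q/\pi(A)$ (a group, since $Q$ is abelian) is a definably compact definably connected group of dimension $0$, i.e.\ trivial.

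The one genuine error is your treatment of $T$. You assert that the abelian case ``gives a definable torus $T\leq A$,'' and your closing paragraph describes the technical core as lifting the torsion of $Q$ ``definably to a definable torus.'' No such definable complement exists in general: Example \ref{str-ex} is an abelian $0$-group whose only definably compact definable subgroups are finite, so $\mathcal{N}(A)$ admits no \emph{definable} direct complement there. The $T$ of the Fact is an \emph{abstract} subgroup (an abstract torus in the sense of Definition \ref{abstract-dfn}), and its existence is cheap: $\mathcal{N}(A)$ is a divisible subgroup of the abelian group $A$, and a divisible subgroup of an abelian group is always a direct summand as abstract groups. So the step you single out as the hard part is both impossible as you describe it and unnecessary; the genuinely hard part is the definable compactness of $G/\mathcal{N}(G)$, which you assumed.
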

The following lemma was suggested by interesting questions from the referee:

\begin{lem} \label{torsion}
 Let $G$ be a definably connected abelian definable group. Let $S$ be the torsion subgroup  and $A$ the $0$-Sylow of $G$. Then
 
 \begin{enumerate}
 \item[(i)] $G$ has a unique maximal abstract torus $T$.  
 
 \item[(ii)] $\langle S \rangle = \langle T \rangle = A$. 
 \end{enumerate}
\end{lem}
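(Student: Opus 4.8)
The plan is to reduce everything to the case $E(G)=0$ and then extract all three equalities from the solvable decomposition of Fact \ref{me-solvable} together with the multiplicativity of $E$. If $G$ is torsion-free then $S=\{e\}$, the only abstract torus is trivial, and $A=\{e\}$ (a torsion-free group has no nontrivial $0$-subgroup, by Fact \ref{fact}(1) applied to $E=\pm1$), so the statement is vacuous. Assume therefore $E(G)=0$. Since $G$ is abelian the $0$-Sylow $A$ is unique (Fact \ref{fact}(2)) and is a $0$-group, and Fact \ref{me-solvable} gives $G=\mathcal{N}(G)\times T$ with $T\subseteq A$ a complement of $\mathcal{N}(A)$ in $A$ and $T\cong A/\mathcal{N}(A)=:\tau$ a definable torus. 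As $\mathcal{N}(G)$ is torsion-free, every torsion element of $G$ lies in $T$; thus $S\subseteq T\subseteq A$ and in particular $\langle S\rangle\subseteq A$.

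The heart of the argument is the equality $\langle S\rangle=A$, which I would obtain by an Euler-characteristic computation. Set $B:=\langle S\rangle$ and let $p\colon A\to\tau$ be the projection. Since torsion is dense in the definable torus $\tau$, the definable subgroup $p(B)\subseteq\tau$ contains the dense set $p(S)=\mathrm{Tor}(\tau)$ and hence equals $\tau$; equivalently $B+\mathcal{N}(A)=A$. By the second isomorphism theorem (valid definably) we get a definable isomorphism
\[
A/B\;\cong\;\mathcal{N}(A)/\bigl(B\cap\mathcal{N}(A)\bigr).
\]
Here $D:=B\cap\mathcal{N}(A)$ is a definable subgroup of the torsion-free group $\mathcal{N}(A)$, so $E(D)=\pm1$ and $E(\mathcal{N}(A))=\pm1$ by Fact \ref{fact}(1); multiplicativity (Fact \ref{products}) then forces $E(\mathcal{N}(A)/D)=\pm1$, i.e. $A/B$ is torsion-free. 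Were $B$ a proper definable subgroup of $A$, the fact that $A$ is a $0$-group would give $E(A/B)=0$, contradicting $E(A/B)=\pm1$. Hence $B=A$, that is $\langle S\rangle=A$. The remaining equality of (ii) is immediate: $S\subseteq T\subseteq A$ yields $A=\langle S\rangle\subseteq\langle T\rangle\subseteq A$, so $\langle T\rangle=A$.

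For (i), existence of a maximal abstract torus is exactly the $T$ produced by Fact \ref{me-solvable}, and the description above shows that any such $T$ satisfies $S\subseteq T\subseteq A$ and projects isomorphically onto the full definable torus $\tau=A/\mathcal{N}(A)$ under $p$. I expect \emph{uniqueness} to be the main obstacle, since a priori two maximal abstract tori are two complements of $\mathcal{N}(A)$ in $A$ and differ by a homomorphism $\tau\to\mathcal{N}(A)$; every such homomorphism automatically kills the dense torsion of $\tau$, so the containment $S\subseteq T$ alone does not pin $T$ down. The plan here is to show that any maximal abstract torus is forced to lie inside $A$ and to coincide with the canonical complement attached to the decomposition of $A$ in Fact \ref{me-solvable}, ruling out the pathological (non-definable) sections by exploiting that $\mathcal{N}(A)$ is definable and torsion-free together with the definable contractibility of torsion-free groups (Fact \ref{tfref}); making this last reduction precise is the delicate point on which the whole of (i) rests.
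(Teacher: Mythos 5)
Your part (ii) is essentially sound and lands on the same punchline as the paper: a proper definable subgroup $B$ of the $0$-group $A$ would have $E(A/B)=0$, while containing all torsion forces $E(A/B)=\pm 1$. The route you take to get there is slightly different (you first prove $B+\mathcal{N}(A)=A$ and apply the second isomorphism theorem; the paper instead notes $\langle S\rangle$ is definably connected, hence divisible, hence a direct summand of $A$ with torsion-free complement), but one step of yours fails in the stated generality: torsion is \emph{not} dense in a definable torus over a non-archimedean real closed field (e.g.\ the torsion points of $\SO_2(R)$ are the standard roots of unity, and an infinitesimal neighbourhood of a generic point contains none of them), and the paper explicitly works over an arbitrary o-minimal expansion of a real closed field. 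The conclusion you want, $p(B)=\tau$, is still true, but it needs a different justification -- for instance, $\tau/p(B)$ is a definably connected, definably compact quotient which is torsion-free (torsion lifts along the divisible kernel $p(B)$), hence trivial since a nontrivial such group has $E=0$ while torsion-free definable groups have $E=\pm 1$.

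The genuine gap is part (i). You correctly identify that two complements of $\mathcal{N}(A)$ in $A$ could a priori differ by an abstract homomorphism $\tau\to\mathcal{N}(A)$, and that such a homomorphism kills torsion, so containment of $S$ does not pin $T$ down -- but you then stop at a ``plan'' and never resolve this, while acknowledging that the whole of (i) rests on it. This is precisely the content of the statement, so the proof is incomplete. The paper's argument runs as follows: given another maximal abstract torus $T_1$, one may take the witnessing homomorphism to be the canonical projection $\pi\colon G\to \bar{G}=G/\mathcal{N}(G)$, since $\bar{G}$ is the maximal definably compact quotient of $G$; maximality of $T_1$ forces $\pi(T_1)=\bar{G}$, because otherwise $\bar{G}=\pi(T_1)\times\bar{K}$ by divisibility and pulling $\bar{K}$ back into $T$ would produce a strictly larger abstract torus $T_1\times K$; finally one argues that $\pi(T\cap T_1)=\pi(T)\cap\pi(T_1)=\bar{G}$, so $T\cap T_1$ is itself a maximal abstract torus and $T=T_1$. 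Whatever one thinks of the delicacy of that last intersection step, it is an actual argument where your proposal has only an intention; to complete your write-up you must supply the mechanism that excludes the non-definable sections, not merely name the tools (definability of $\mathcal{N}(A)$, definable contractibility) you hope will do it.
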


\begin{proof}
 \begin{enumerate}
 \item[(i)] As reminded in Fact \ref{me-solvable}, a maximal abstract torus $T$ of $G$ is found by taking a direct complement of $\mathcal{N}(A) = \mathcal{N}(G) \cap A$ in $A$, that we know exists because both $\mathcal{N}(A) $ and $A$ are divisible abelian groups. 
 Let $T_1$ be another maximal abstract torus of $G$ and $f_1 \colon G \to G_1$ be a definable homomorphism that restricted to $T_1$ is an isomorphism with a definable torus $\bar{T}_1 \subset G_1$. Since $\bar{G} = G/\mathcal{N}(G)$ is the maximal quotient of $G$ that is definably compact (that is, it contains every definably compact quotient of $G$), we can assume $G_1 = \bar{G}$ and $f_1 =   \pi \colon G \to \bar{G}$, to be the canonical projection.
 
We claim that $\pi(T_1) = \bar{G}$. If not, by divisibility again, $\pi(T) = \bar{G} = \pi(T_1) \times \bar{K}$, for some infinite subgroup $\bar{K}$. Let now $K$ be the image in $T$ of $\bar{K}$ by an isomorphism between $T$ and $\bar{G}$. Then $T_1 \times K$ is an abstract torus containing properly $T_1$, contradiction.

As the restriction of $\pi$ to $T$ is injective, it follows that $\pi(T \cap T_1) = \pi (T) \cap \pi(T_1) = \bar{G}$. Therefore $T \cap T_1$ is a maximal abstract torus, and $T = T_1$. \\


 
 \item[(ii)] Set $K = \langle S \rangle$ and $H =  \langle T \rangle$. As $S \subset T \subset A$, we know that $K \subseteq H \subseteq A$.  
 


Since $K$ is definably connected, it is divisible and  $A = K \times K_1$, for some $K_1 < A$. As $K$ contains all torsion elements of $A$, it follows that $K_1$ is torsion-free and therefore $E(A/K) = \pm 1$. However, $A$ is a $0$-group, so $K_1 = \{e\}$ and $K = A$. 
\end{enumerate}
\end{proof}

The following proposition shows that nilpotent definably connected groups are essentially abelian modulo the maximal torsion-free definable subgroup:

\begin{prop}\label{decomposition}
Let $G$ be a  nilpotent group such that $E(G) = 0$ and let $S$ be the torsion subgroup of $G$. Suppose $G$ is definably connected. Then

\begin{enumerate}
\item $G$ has a unique $0$-Sylow subgroup $A$ and it is contained in the center of $G$;

\item $\langle S \rangle = A$;  

\item $G$ has a unique maximal abstract torus $T \subseteq A$ and $T$ is a direct complement of $\mathcal{N}(G)$. Namely, 
\[G = \mathcal{N}(G) \times T \]
\end{enumerate}

\end{prop}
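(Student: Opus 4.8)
The plan is to reduce everything to the abelian statements already established in Lemma \ref{torsion}, the one genuinely new ingredient being that nilpotency forces \emph{all} torsion into the centre. Since $G$ is nilpotent it is solvable, so Fact \ref{me-solvable} applies: writing $N=\mathcal{N}(G)$, the quotient $\bar G=G/N$ is a definable torus, and for any $0$-Sylow $A$ we have $G=N\cdot A$ with $A$ abelian and definably connected (Fact \ref{fact}(5)). In particular $[G,G]\subseteq N$, so \emph{every commutator is torsion-free}. I record this property because it drives the whole argument and because it persists under quotients: any definable quotient of $G$ that is still nilpotent and definably connected again has its commutator subgroup inside its own $\mathcal{N}$, once more by Fact \ref{me-solvable}.

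The key step, which I expect to be the main obstacle, is the claim that \emph{every torsion element of $G$ is central}, which I would prove by induction on the nilpotency class $c$, for an arbitrary nilpotent definably connected group. The class-one (abelian) case is trivial. For the inductive step, let $\gamma_c$ be the last nontrivial term of the lower central series; it is definable, definably connected, central, and contained in $[G,G]\subseteq N$, hence torsion-free. Given a torsion element $t$ of order $n$, the induction hypothesis applied to the class-$(c-1)$ group $G/\gamma_c$ shows that the image of $t$ is central there, i.e.\ $[g,t]\in\gamma_c$ for all $g\in G$. Since $\gamma_c$ is central, the standard identity gives $[g,t]^n=[g,t^n]=e$, and torsion-freeness of $\gamma_c$ forces $[g,t]=e$; thus $t\in Z(G)$. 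The delicate points here are the definability and definable connectedness of the lower central series terms, needed so that $G/\gamma_c$ is again a legitimate definably connected nilpotent group, together with the bookkeeping of the commutator identity; both are standard but must be handled carefully.

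Granting this, part $(1)$ is immediate: $A$ is abelian and definably connected, so Lemma \ref{torsion}(ii) applied to $A$ gives $\langle S_A\rangle=A$ for its torsion subgroup $S_A$. As $S_A$ consists of torsion elements of $G$, it lies in the definable subgroup $Z(G)$, whence $A=\langle S_A\rangle\subseteq Z(G)$. A central $0$-Sylow is normal, and since all $0$-Sylows are conjugate (Fact \ref{fact}(2)) it is the unique one. For part $(3)$, take the maximal abstract torus $T\subseteq A$, a direct complement of $\mathcal{N}(A)=N\cap A$ in $A$ (which exists by divisibility, exactly as in Lemma \ref{torsion}(i)); since $A$ is central, $T$ is central, so the semidirect decomposition $G=N\rtimes T$ of Fact \ref{me-solvable} is in fact direct, $G=N\times T$ with $N\cap T=\{e\}$. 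Uniqueness of $T$ reduces to Lemma \ref{torsion}(i): every abstract torus of $G$ is contained in a $0$-Sylow (Fact \ref{me-solvable}), hence in the unique central $A$, so the maximal abstract tori of $G$ are exactly those of the abelian group $A$, of which there is precisely one. Finally part $(2)$ drops out of the direct decomposition: since $N$ is torsion-free, the torsion subgroup $S$ of $G=N\times T$ equals the torsion of $T$, which lies in $A$, so $S=S_A$ and $\langle S\rangle=\langle S_A\rangle=A$ by Lemma \ref{torsion}(ii).
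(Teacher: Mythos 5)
Your argument is correct in substance but takes a genuinely different route from the paper. The paper proves Proposition \ref{decomposition} by induction on $\dim G$, with a case analysis on $G/Z(G)$ (definably compact, torsion-free, or Euler characteristic $0$), pulling back decompositions through quotients and invoking Proposition 2.6 of \cite{me2} to see that $\mathcal{N}(A)$ is central. You instead isolate a single structural lemma --- every torsion element of a definably connected nilpotent group is central --- and prove it by induction on the nilpotency class, using that $[G,G]\subseteq\mathcal{N}(G)$ is torsion-free (Fact \ref{me-solvable}) together with the identity $[g,t]^n=[g,t^n]$ for $[g,t]$ central. Everything else then falls out of Lemma \ref{torsion} applied to the $0$-Sylow $A$. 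This is arguably cleaner and more conceptual: it avoids the case split entirely, makes transparent exactly where nilpotency is used, and yields centrality of $S$ (which the paper only extracts at the end, via normality of the finite subgroups $A[k]$) as the organizing principle rather than a byproduct.

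Two points need shoring up. First, as you acknowledge, the terms $\gamma_c$ of the lower central series need not be definable; the clean fix is to replace $\gamma_c$ by $\langle\gamma_c\rangle$ (Fact \ref{fact}(7)), which is contained in the definable subgroups $Z(G)$ and $\mathcal{N}(G)$ and hence is still central and torsion-free, and whose quotient is still definably connected nilpotent of class $\leq c-1$; with this substitution the induction runs verbatim. Second, your uniqueness argument for $T$ rests on the assertion that every abstract torus of $G$ is contained in a $0$-Sylow, which you attribute to Fact \ref{me-solvable}; that fact does not say this, and the claim requires an argument (an abstract torus need not be definable, so one cannot simply take the definable subgroup generated by its torsion). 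The paper is equally terse at this exact point --- it passes from the maximal abstract torus of $A$ to that of $G$ without justification --- so this is a shared omission rather than a defect of your approach, but it is the one step of part (3) that is not actually reduced to Lemma \ref{torsion}(i) by what you have written.
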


\begin{proof} 
By induction on $n = \dim G$. If $n = 1$, then by Fact \ref{fact}(4)(5), $G$ is a $0$-group and there is nothing to prove. Suppose $n > 1$. If $G$ is abelian, see Fact \ref{me-solvable}. So let $G$ be non-abelian and set $Z = Z(G)$. Note that both $Z$ and $G/Z$ are infinite, because $G$ is nilpotent and definably connected. 

If $G/Z$ is definably compact, then $\mathcal{N}(G/Z)$ is trivial and  $\mathcal{N}(G) \subseteq Z$. Therefore $G = \mathcal{N}(G) \times T$ 
(Fact \ref{me-solvable}) and $G$ has a unique $0$-Sylow $A$ that is, moreover, central. 

So let $G/Z$ be not definably compact.  We distinguish the two cases where $E(G/Z) = \pm 1$ or $E(G/Z) = 0$. In the first case, $G/Z$ is torsion-free and $S \subset Z$. Note that the unique $0$-Sylow of $Z$ is the $0$-Sylow of $G$ as well by Fact \ref{fact}(3),
and the other claims follow by Lemma \ref{torsion}.

If $E(G/Z) = 0$, by induction $G/Z$ has a unique $0$-Sylow $A_1$, and $G/Z = N_1 \times T_1$, 
 where $N_1 = \mathcal{N}(G/Z)$ is definable torsion-free, and $T_1 \cong A_1/\mathcal{N}(A_1)$ is a maximal abstract torus of $G/Z$. Note that $A_1$ is the image of any $0$-Sylow $A$ of $G$. By Proposition 2.6 in \cite{me2}, $\mathcal{N}(A)$ is central in $G$, therefore $A_1$ is definably compact and $T_1 = A_1$.

Let $H$ be the pull-back of $A_1$ in $G$. As $A_1$ is normal, $H$ is normal as well. The quotient $G/H = N_1$ is torsion-free, so $H$ contains all $0$-subgroups of $G$. By induction (since $\dim N_1 > 0$, as $G/Z$ is not definably compact), $H$ has a unique $0$-Sylow, so $G$ has a unique $0$-Sylow $A = \mathcal{N}(A) \times T$.

Since $A$ is the unique $0$-Sylow, then $A$ is normal in $G$. Note that the definable group $G/A$ is definably isomorphic to $\mathcal{N}(G)/\mathcal{N}(A)$(\ref{me-solvable}), so torsion-free. It follows that $A$ contains all $k$-torsion elements $G[k]$ of $G$, for each $k \in \N$, and $A[k] = G[k]$.
Each $A[k]$ is a finite normal subgroup of $G$, therefore central. Therefore $S$ is a central subgroup of $G$ (and coincides with the torsion subgroup of $A$). 
By Lemma \ref{torsion}, $\langle S \rangle = A$ and $A$ is central as well.
The unique maximal abstract torus $T$ of $A$ from Lemma \ref{torsion} is the unique maximal abstract torus of $G$. As $T$ is isomorphic to $G/\mathcal{N}(G)$ (Fact \ref{me-solvable}), then $G = \mathcal{N}(G) \times T$.
 \end{proof}


\begin{rem}
The nilpotency assumption in Proposition \ref{decomposition} cannot be extended to solvability, not even for linear groups. For instance, the group $G = \R^2 \rtimes \SO_2(\R)$, where $\SO_2(\R)$ acts on $\R^2$ by matrix multiplication, is a centerless solvable linear group with several $0$-Sylows.  
 \end{rem}

We now show the second part of Theorem \ref{finite-like}:
 
\begin{proof}[Proof of Theorem \ref{finite-like}(2)] Let $G$ be a definably connected group with $E(G) = 0$.
\begin{enumerate}
\item[$(a) \Rightarrow (b)$] If $G$ is nilpotent, then by Proposition \ref{decomposition}, $G$ has exactly one $0$-Sylow.  

\item [$(b) \Rightarrow (c)$] Obvious. 

\item [$(c) \Rightarrow (a)$]  By Theorem 1.5 in \cite{me2}, $G = PH$ where $P$ is a union of conjugates of a $0$-Sylow $A$ and $H$ is definable torsion-free.   Since $A$ is normal in $G$ by assumption, then $P = A$ and $G$ is solvable. Whenever $G$ is solvable and definably connected, then $G/\mathcal{N}(G)$ is definably compact and therefore abelian by  \cite{Peterzil-Starchenko1}. As we are assuming $\mathcal{N}(G)$ nilpotent, then $G$ is nilpotent as well.
\end{enumerate} \vspace{-.2cm}
 \end{proof}

We conclude the section with the proof of Theorem \ref{first}:

\begin{proof}[Proof of Theorem \ref{first}] Let $G$ be a nilpotent definable group.
\begin{enumerate}
 
\item We want to show that $G$ has maximal abstractly compact subgroups $K$, and any such $K$ is a direct complement of $\mathcal{N}(G)$. If $E(G) \neq 0$, then $K = F$ from Proposition \ref{str-nilpotent}. If $G = G^0$ and $E(G) = \pm 1$, then $K = \{e\}$.
If $G = G^0$ and $E(G) = 0$, then $K = T$ in Proposition \ref{decomposition}. If $E(G) = 0$ and $G \neq G^0$, then $K = F \cdot T$, where $F$ is a finite normal subgroup of $G$ such that $G = F \cdot G^0$ \cite[Theo 6.10]{Edmundo}, and $T$ is the  maximal abstract torus of $G^0$ from Proposition \ref{decomposition}.

\item If $G$ is definably connected, then $E(G) = \pm 1$ or $E(G) = 0$. 
Set $N = \mathcal{N}(G)$. If $E(G) = \pm 1$, then $G = N$ and $\{e\}$ is the only abstractly compact subgroup of $G$. If $E(G) = 0$,
by Proposition \ref{decomposition}, 
\[
Z(G) = Z(N) \times T
\]
\noindent where $T$ is the maximal abstract torus of $G$. Therefore $Z(G)$ is definably connected and contains every abstractly compact subgroup of $G$.  
\end{enumerate}\vspace{-.5cm}
\end{proof}

\section{Nilpotency and normalizers}

It is well-known that a finite group $G$ is nilpotent if and only if $G$ has the normalizer property (also called normalizers grow). That is, every proper subgroup $H$ of $G$ is contained properly in its normalizer
$
N_G(H) = \{g \in G : H^g = H\}.
$

For infinite groups one implication still holds: every nilpotent group has the normalizer property. However, there are infinite groups with this property that are not even solvable. We show below that for groups definable in o-minimal structures nilpotency is equivalent to the
normalizer property for definable subgroups, provided $\mathcal{N}(G)$ is nilpotent:

\begin{prop}
Let $G$ be a definably connected group such that $\mathcal{N}(G)$
is nilpotent. Then $G$ is nilpotent if and only if $H \subsetneq N_G(H)$, for every proper definable $H < G$. 
\end{prop}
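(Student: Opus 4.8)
The plan is to prove the two directions separately, using the structural decompositions already established. For the forward direction, nilpotency implies the normalizer property in any abstract group, so there is nothing o-minimal to prove: if $G$ is nilpotent and $H < G$ is proper, then $H$ is contained properly in $N_G(H)$. I would simply cite this standard fact from group theory, since the definability of $H$ plays no role here.

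The substance lies in the converse: assuming the normalizer property holds for all proper definable subgroups, I want to deduce nilpotency. Since $G$ is definably connected, Fact \ref{tricotomy} gives a dichotomy: either $E(G) = \pm 1$ or $E(G) = 0$. In the first case $G$ is torsion-free, so $G = \mathcal{N}(G)$, which is nilpotent by hypothesis, and we are done immediately. So the real work is the case $E(G) = 0$. Here I plan to exploit Theorem \ref{finite-like}(2), which tells us that for a definably connected $G$ with $E(G) = 0$ and $\mathcal{N}(G)$ nilpotent, nilpotency is equivalent to having all $0$-Sylow subgroups normal. Thus it suffices to show that the normalizer property forces every $0$-Sylow $A$ of $G$ to be normal.

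To establish normality of a $0$-Sylow $A$, the natural move is to consider its normalizer $N = N_G(A)$ and argue that $N = G$. The key tool is a \emph{Frattini-type argument}: by Fact \ref{fact}(2) all $0$-Sylows are conjugate, so for any $g \in G$ the subgroup $A^g$ is again a $0$-Sylow of $N_G(A)$ (provided $A \subseteq N$), and conjugacy of $0$-Sylows \emph{inside} $N$ should let me absorb $g$ into $N$, yielding $G = N \cdot N_G(A) = N$. Concretely, I would first replace $N$ by its definable hull if needed (using Fact \ref{fact}(7)) to ensure we are working with a definable subgroup to which the normalizer hypothesis applies, then run the standard finite-group Frattini argument adapted to the o-minimal Sylow theory of Strzebonski. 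If this shows $N_G(A) = G$, then $A$ is normal, and Theorem \ref{finite-like}(2) delivers nilpotency.

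\textbf{Main obstacle.} The delicate point is the Frattini argument in the o-minimal setting. The classical argument uses finiteness and counting; here $A$ is infinite (a $0$-group is definably connected and abelian by Fact \ref{fact}(5)), so I must instead rely on conjugacy of $0$-Sylows and the normalizer-grows hypothesis to force the inclusion $N_G(N_G(A)) = N_G(A)$, then use connectedness and the normalizer property to conclude $N_G(A) = G$. Specifically, if $N_G(A) < G$ were proper and definable, the hypothesis would give $N_G(A) \subsetneq N_G(N_G(A))$; I would need to derive a contradiction from this, presumably by showing $A$ is the unique $0$-Sylow of $N_G(N_G(A))$ and hence characteristic, which would make $N_G(N_G(A))$ normalize $A$ and collapse the chain. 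Verifying that the relevant subgroups are definable and that the conjugacy/uniqueness bookkeeping goes through without a cardinality argument is where I expect the real care to be required.
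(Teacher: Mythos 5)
Your proposal is essentially correct, but it is organized genuinely differently from the paper's proof, and in one respect more economically. The paper does not reduce immediately to Theorem \ref{finite-like}(2): it first rules out the non-solvable case by hand, passing to the centerless semisimple quotient $\bar{G}$ of $G$ by its solvable radical and exhibiting a proper definable self-normalizing subgroup of $G$ there (the normalizer of a maximal definable torus when $\bar{G}$ is definably compact, the preimage of a maximal definably compact subgroup otherwise), and only then, with $G$ known to be solvable, runs the argument on a $0$-Sylow $A$ and $H = N_G(A)$. You instead go straight for ``the normalizer hypothesis forces the $0$-Sylow to be normal'' and let Theorem \ref{finite-like}(2), (c)$\,\Rightarrow\,$(a), absorb the non-solvable case; this is legitimate, since the proof of that implication (via the decomposition $G = PH$ from Theorem 1.5 of \cite{me2}) already shows that a normal $0$-Sylow forces solvability. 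Two points of care. First, drop the Frattini framing: $G = N \cdot N_G(A)$ is the wrong template here (it presupposes a normal subgroup), and what you actually need --- and do describe in your last paragraph --- is the standard ``normalizers of Sylows are self-normalizing'' argument: $A$ is normal in the definable subgroup $N = N_G(A)$, hence by conjugacy of $0$-Sylows (Fact \ref{fact}(2) applied to $N$) it is the unique $0$-Sylow \emph{of $N$} (not of $N_G(N)$, as you wrote --- proving uniqueness there would presuppose what you are trying to establish); then any $g$ with $N^g = N$ sends $A$ to a $0$-Sylow $A^g \subseteq N$ of $N$, so $A^g = A$ and $g \in N$, whence $N$ is self-normalizing and the hypothesis forces $N = G$. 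Note that this derivation of uniqueness needs only normality of $A$ in $N$ plus conjugacy, whereas the paper first deduces that $H = N_G(A)$ is nilpotent from Theorem \ref{finite-like}(2) and then quotes Proposition \ref{decomposition} for uniqueness of its $0$-Sylow. Second, there is nothing to gain from passing to definable hulls: $N_G(A)$ is already definable because $A$ is.
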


\begin{proof}
Assume $H \subsetneq N_G(H)$ for every proper definable $H < G$. We will show that $G$ is nilpotent. 

Recall that every definable group has a maximal normal definably connected solvable subgroup, called its solvable radical. If $G$ is not solvable, let $R$ be the solvable radical of $G$. Then the quotient of $G/R$ by its finite center is a centerless semisimple group $\bar{G}$. 

Suppose $\bar{G}$ is definably compact and let $H$ be the normalizer of a maximal definable torus $T$ of $\bar{G}$. We claim that $H$ is self-normalizing. Suppose $g \in \bar{G}$ normalizes $H$. Then $T^g$ is a maximal definable torus of $H$. Therefore $T^g = T^x$ for some $x \in H$, and $g \in H$ as well.   
Now the pull-back of $H$ in $G$ is a proper definable subgroup equal to its normalizer, contradiction.

If  $\bar{G}$ is not definably compact, then by \cite{me2}, $\bar{G} = \bar{K} \bar{H}$, where $\bar{K}$ is definably compact and  $\bar{H}$ is torsion-free. By \cite{PPSIII}, $G$ is elementarily equivalent to a connected centerless semisimple Lie group, for which maximal compact subgroups are self-normalizing subgroups. Therefore the pre-image of $\bar{K}$ in $G$ is be a proper definable subgroup equal to its normalizer, contradiction.

Hence $G$ must be solvable. If $G$ is not torsion-free, let $A$ be a $0$-Sylow of $G$. Then $G = \mathcal{N}(G) \cdot A$ by Fact \ref{me-solvable}.  Let $H = N_G(A)$. If $H = G$, then $A$ is normal in $G$. By Theorem \ref{finite-like}, then $G$ is nilpotent, and we are done. Assume that $H$ is a proper subgroup of $G$ (which by Theorem \ref{finite-like}, is equivalent to say that $G$ is not nilpotent). We claim that 
$N_{G} (H) = H$. Since $A$ is normal in $H$, then by Theorem \ref{finite-like}(2), $H$ is nilpotent. 
Let now $g \in G$ be such that $H^g = H$. As  $H$ is nilpotent,  by Proposition \ref{decomposition}, $A$ is the only $0$-Sylow of $G$ and $A^g = A$. Therefore $g \in N_{G} (A) = H$. So $H$ is a proper definable subgroup of $G$ equal to its normalizer, contradiction.
%

Thus we have shown that every time $G$ is not nilpotent, there is a definable subgroup $H < G$ such that $N_{G} (H) = H$.
\end{proof} 

As mentioned in the introduction, we do not know whether the assumption $\mathcal{N}(G)$ nilpotent in the Proposition above can be removed. 
 That is:

\begin{que} 
Is there a torsion-free definable group with the normalizer property for definable subgroup that is not nilpotent?
\end{que}

Proposition 3.1 finishes the proof of Theorem \ref{finite-like}.

\vs
\section{Nilpotent groups and linear algebraic groups}

\noindent
Connected solvable Lie groups that are definable in an o-minimal expansion of the reals are completely characterized in \cite{COS}. Some of them, for instance the group in \cite{Tao} pg. 327, are not Lie isomorphic to any linear algebraic group. However, if we restrict to nilpotent groups, the only definable Lie groups are linear algebraic:


\begin{proof}[Proof of Theorem \ref{Lie}] Clearly linear algebraic groups over the reals are definable
in the real field. Conversely, let $G$ be a nilpotent real Lie group definable in an o-minimal structure. 

First assume $G$ is connected.  
By Proposition \ref{decomposition}, $G$ has a definable torsion-free subgroup
$N= \mathcal{N}(G)$ and a central connected compact subgroup $T$ such that $G = N \times T$. Note that since $N$ is definable torsion-free, then it is closed and simply-connected (Fact \ref{tfref}).
By Theorem 4.5 in \cite{COS}, $N$ is a triangular group, so is isomorphic to a closed connected subgroup
of $UT_m(\R)$, the group of unipotent upper triangular matrices, for some $m \in \N$. All such groups are algebraic, as the exponential map is polynomial for nilpotent Lie algebras. If $\dim T = k$, then the subgroup $T$ is 
Lie isomorphic to the algebraic group $SO_2(\R)^k$.

If $G$ is not definably connected, then by \cite{Edmundo}, $G = F \cdot G^0$, for some finite normal subgroup $F$. By the connected case, $G^0$ is linear algebraic. Since finite groups are linear algebraic, so is $G$.
Therefore a definable real nilpotent  $G$ is Lie isomorphic to $\mathbb{U} \times \mathbb{K}$, where $\mathbb{U}$ is a closed connected subgroup of some $UT_m(\R)$, and $\mathbb{K}$, the maximal compact subgroup of $G$, is isomorphic to $F \cdot SO_2(\R)^k$, for some finite nilpotent $F$.
\end{proof}

By Theorem \ref{first} and results of Hrushovski, Peterzil and Pillay \cite{NIPI, HPP} on compact groups, the problem of determining whether a definable nilpotent group is elementarily equivalent to a linear algebraic group reduces to the torsion-free case.

By \cite{PPSIII}, every linearizable abelian torsion-free definable group can be decomposed into the product of definable 1-dimensional subgroups. This definable splitting has been proved also in \cite{PeSta} for groups definable in several o-minimal structures, and by an induction argument it reduces to the 2-dimensional case:

\begin{conj} \label{2dim-conj}
Every abelian 2-dimensional torsion-free group definable in an o-minimal structure \M\ is the product of two definable 1-dimensional subgroups. 
\end{conj}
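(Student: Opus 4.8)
The plan is to reduce Conjecture \ref{2dim-conj} to the \emph{linearizability} of $G$ and then appeal to \cite{PPSIII}. First observe that, since $G$ is torsion-free, $\mathcal{N}(G) = G$, so by Fact \ref{me-solvable} the maximal definably compact quotient of $G$ is trivial. Hence if $H < G$ is any definable $1$-dimensional subgroup, the quotient $G/H$ is a $1$-dimensional abelian definable group with no nontrivial definably compact (torus) quotient, so it is torsion-free; and $1$-dimensional torsion-free abelian definable groups are definably isomorphic to $(R,+)$, the additive group of the underlying real closed field $R$. Thus the statement splits into two tasks: (a) produce one definable $1$-dimensional subgroup $H < G$; and (b) split the extension $0 \to H \to G \to G/H \to 0$ by a \emph{definable} homomorphic section. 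Given both, the image $H'$ of the section is a second $1$-dimensional definable subgroup with $H \cap H'$ torsion-free and finite, hence trivial, so $G = H \times H'$.

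For both tasks I would run the Peterzil--Starchenko dichotomy for the definable abelian group $G$: either $G$ is \emph{linear}, in which case it is definably isomorphic to a subgroup of a vector group $(R^n, +)$ and hence linearizable, so the decomposition into $1$-dimensional subgroups is already supplied by \cite{PPSIII}; or a real closed field $K$ is definable in the structure induced on $G$. In the latter case $K$ is $1$-dimensional over $R$, and the intended move is to upgrade the ambient field to a definable $K$-vector-space structure on $G$, realizing $G$ as $K^2$; any $K$-line is then the sought definable $1$-dimensional subgroup $H$, and a complementary $K$-line furnishes the definable section for (b). Either way $G$ would be linearizable and the conjecture would follow.

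For the splitting in (b) independently of linearization, I would also exploit $\Aut(G)$, as in the Section 4 digression: when $\dim \Aut(G) > 0$ a decomposition into $1$-dimensional subgroups is already available, so it remains only to treat the rigid case $\dim \Aut(G) = 0$. The abstract extension $0 \to H \to G \to G/H \to 0$ always splits group-theoretically, because $H \cong (R,+)$ is divisible and hence an injective $\mathbb{Z}$-module; the only issue is definability of a complement. I expect that a genuinely non-split \emph{definable} extension would force a positive-dimensional family of definable endomorphisms of $G$, contradicting $\dim \Aut(G) = 0$, so that rigidity should push $G$ back into the linear case.

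The step I expect to be the main obstacle is precisely the field case of tasks (a)--(b) in an \emph{arbitrary} o-minimal structure: converting a definable field lurking in the induced structure into an honest definable $K$-module structure on $G$, and thereby ruling out an exotic, non-linearizable $2$-dimensional torsion-free abelian group with too few definable subgroups to split. The abstract splitting is free and the linear and field-linear cases are tractable, but controlling all definable $2$-dimensional torsion-free extensions uniformly across o-minimal structures is exactly the point the present machinery does not settle, which is why the statement remains a conjecture.
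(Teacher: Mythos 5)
The statement you are asked to prove is labelled a \emph{conjecture} in the paper, and the paper gives no proof of it: it explicitly records that the question is open in an arbitrary o-minimal structure, and only establishes the special case where $\dim\Aut(G)>0$ (Proposition 4.3). Your proposal is likewise not a proof. It is a strategy outline that correctly identifies the two tasks (produce one definable $1$-dimensional subgroup, then definably split the extension) and then concedes, in its final paragraph, that the crucial step --- upgrading a field definable in the induced structure to a definable module structure on $G$, or otherwise forcing linearizability --- is exactly what cannot currently be done. So there is no disagreement with the paper about the status of the statement, but there is also no proof to assess: the gap you name at the end is the gap, and nothing earlier in the proposal closes it. The appeal to the Peterzil--Starchenko dichotomy does not resolve matters because the ``field case'' does not automatically yield a definable $K$-vector-space structure on $G$, and the abstract splitting of the extension (via divisibility of $H$) says nothing about definability of a complement, which is the entire content of the conjecture.

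Two further points. First, your claim that every $1$-dimensional torsion-free abelian definable group is definably isomorphic to $(R,+)$ is false in general: in a pure real closed field, $(R,+)$ and $(R^{>0},\cdot)$ are both $1$-dimensional, torsion-free and abelian, but are not definably isomorphic. (The correct, and sufficient, observation is that $E(G)=E(H)E(G/H)$ forces $E(G/H)=\pm 1$, so $G/H$ is torsion-free; no identification with $(R,+)$ is needed.) Second, where you defer to the $\dim\Aut(G)>0$ case as ``already available'', the paper's actual argument there is worth noting: it uses Peterzil--Starchenko to put a definable field structure on the quotient $\bar G=G/H$, identifies $\Aut(G)$ with $(\bar G^{\ast},\cdot)$ via the embedding $\Aut(G)\hookrightarrow\Aut(\bar G)$, and then shows that the orbit $X=\{\varphi(x):\varphi\in\Aut(G)\}$ of a generator $x\notin H$, together with $0$, is itself a definable $1$-dimensional subgroup complementing $H$. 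That construction is quite different from, and more concrete than, the linearization route you sketch, but it genuinely uses the hypothesis $\dim\Aut(G)>0$ and does not extend to the rigid case.
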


 It is unknown whether Conjecture \ref{2dim-conj} holds in an arbitrary o-minimal structure. 
 We give below a positive answer for groups with an infinite definable family of definable automorphisms:
 
\begin{prop} 
 Let $(G, +)$ be an abelian 2-dimensional torsion-free group definable in an o-minimal structure \M, and let $\Aut(G)$ be the group of \M-definable automorphisms of $G$. If $\dim \Aut(G) > 0$, then $G$ can be decomposed as a direct product of definable 1-dimensional subgroups.
\end{prop}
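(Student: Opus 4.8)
The plan is to reduce the statement to the existence of two \emph{distinct} definable $1$-dimensional subgroups of $G$, and then to produce these by using the positive-dimensional automorphism group to manufacture a definable field over which $G$ becomes a $2$-dimensional vector space. First I would record the reduction. Since $G$ is torsion-free it is definably connected and uniquely divisible (Fact \ref{tfref}), and every definable subgroup of $G$ is again torsion-free, hence $1$-dimensional such subgroups are definably connected. Consequently, if $G_1, G_2 < G$ are two distinct definable $1$-dimensional subgroups, then $G_1 \cap G_2$ is a proper definable subgroup of each, so it is finite; being torsion-free it is trivial. A dimension count then gives $\dim(G_1 + G_2) = 2 = \dim G$, and definable connectedness of $G$ forces $G_1 + G_2 = G$. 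Thus $G = G_1 \times G_2$, and it suffices to exhibit two distinct definable $1$-dimensional subgroups.

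Next I would extract a nontrivial definable endomorphism from $\Aut(G)$. Let $H$ be a definably connected $1$-dimensional subgroup of $\Aut(G)^0$, viewed as a definable one-parameter family $\{\phi_t\}_t$ of automorphisms of $G$. Since \M\ expands a real closed field, the definable map $(t,v) \mapsto \phi_t(v)$ can be differentiated at the identity to yield a definable additive endomorphism $D \colon G \to G$, the infinitesimal generator of the $H$-action; as $H$ is nontrivial, $D \neq 0$, and since $\{\phi_t\}$ is a commuting family one has $\phi_s D = D\phi_s$ for all $s$, so $\ker D$ and $\im D$ are definable $H$-invariant subgroups. Because $D \neq 0$ and $G$ is definably connected, $\dim \ker D \leq 1$, and the dimension formula $\dim G = \dim \ker D + \dim \im D$ gives $\dim \im D = 2 - \dim \ker D$. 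If $\ker D$ is $1$-dimensional and $\im D \neq \ker D$, then $\ker D$ and $\im D$ are two distinct definable $1$-dimensional subgroups and we are done.

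The remaining, and genuinely harder, situation is when this naive kernel/image pair yields only \emph{one} $1$-dimensional subgroup: either $D$ is invertible ($\ker D = 0$), or $D^2 = 0$ so that $\ker D = \im D$. In both cases I would appeal to the Trichotomy Theorem of Peterzil and Starchenko. The nontrivial definable endomorphism $D$ (equivalently, the faithful action of the $1$-dimensional group $H$ on the $2$-dimensional group $G$) witnesses non--local-modularity, and therefore interprets a definable real closed field $R$ over which $G$ is definably isomorphic to a $2$-dimensional $R$-vector space, with $D$ acting $R$-linearly. The case in which the interpreted field is $R[\sqrt{-1}]$ and $G \cong R[\sqrt{-1}]$ as an additive group is subsumed here, $G$ still being $2$-dimensional over $R$. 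Once $G$ is identified with a $2$-dimensional $R$-vector space, any two distinct $R$-lines are definable $1$-dimensional subgroups meeting trivially, and the reduction of the first paragraph completes the argument.

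The main obstacle is precisely this last step: producing the definable field and the $R$-linear coordinatization from the single endomorphism $D$ in the degenerate cases, where no second invariant subgroup is visible and the transversal $1$-dimensional subgroup must come from the interpreted linear structure rather than from invariance under $H$. The earlier reduction and the extraction of $D$ are routine, so the entire weight of the proof rests on invoking (or locally re-deriving) the trichotomy machinery to convert a positive-dimensional automorphism group into a genuine vector-space structure on $G$.
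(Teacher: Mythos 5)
Your opening reduction (two distinct definable $1$-dimensional subgroups intersect trivially and generate $G$, hence give the splitting) is correct and is exactly how the paper begins. But both of the subsequent steps have genuine gaps. First, the ``infinitesimal generator'' $D$: if you differentiate $(t,v)\mapsto\phi_t(v)$ in $t$ at the identity of the parameter group, the identity $\phi_t(v+w)=\phi_t(v)+\phi_t(w)$ differentiates to
\[
D(v+w) \;=\; d_1m(v,w)\cdot D(v)\;+\;d_2m(v,w)\cdot D(w),
\]
where $m$ is the group operation of $G$ in ambient coordinates. This equals $D(v)+D(w)$ only if $m$ is affine in those coordinates, i.e.\ only if $G$ is already a vector group --- which is essentially what the proposition is trying to establish. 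What you actually get is a derivation of the Lie algebra of $G$, and its kernel and image are subspaces of $\Lie(G)$, not definable subgroups of $G$: there is no exponential map in a general o-minimal structure to transport them back. So the ``nice'' case ($\ker D$ and $\im D$ distinct and $1$-dimensional) is not established either.

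Second, the degenerate case is where you lean entirely on ``trichotomy machinery,'' and this is circular. The Trichotomy Theorem produces a definable real closed field on a neighbourhood in the structure; the paper already assumes $\M$ expands a real closed field, so a field is not the issue. What you need is a definable field $R$ together with an $R$-vector space structure on $(G,+)$ itself, and that is precisely (a strengthening of) Conjecture \ref{2dim-conj}, which the paper records as open in general. No cited result converts a single definable endomorphism, or a one-parameter family of automorphisms of a $2$-dimensional group, into such a coordinatization. The paper's actual route avoids this: it reduces to the case where the known $1$-dimensional subgroup $H$ is the \emph{only} proper nontrivial definable subgroup, embeds $\Aut(G)$ into $\Aut(G/H)$, applies the Peterzil--Starchenko theorem to the \emph{one-dimensional} quotient $G/H$ (where an infinite definable family of automorphisms genuinely does yield a definable field), identifies $\Aut(G)\cong(\overline{G}^{\ast},\cdot)$, and then shows by a direct, elementary computation that the orbit $X=\{\varphi(x):\varphi\in\Aut(G)\}$ together with $0$ is a $1$-dimensional definable subgroup disjoint from $H$. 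That orbit construction is the idea your proposal is missing; without it, or a proof of the vector-space coordinatization, the argument does not close.
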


\begin{proof}
We know by \cite{PS} that $G$ has a 1-dimensional definable subgroup $H$. 
Suppose $A$ is a different 1-dimensional definable subgroup of $G$. Then $A$ is a definable complement of $H$, and we are done. This is because $A \cap H = \{0\}$, as both $A$ and $H$ have no proper non-trivial definable subgroups,  and $H + A = G$, because $H+A$ is a definable subgroup of full dimension, and $G$ is definably connected.

So assume for a contradition that $H$ is the only non-trivial proper definable subgroup of $G$, and set $\bar{G} = G/H$. Thus $H$ is definably characteristic 
and
%
%
for each $x \in G, x \notin H$, $G = \langle x \rangle$ 
and each definable homomorphism from $G$ is determined by its value on $x$. 

 
\begin{lem} \label{embedding-quotient}
Let $\varphi_1, \varphi_2 \in \Aut(G)$, and let $ \bar{\varphi}_1, \bar{\varphi}_2 \in \Aut(\bar{G})$ be the induced maps on the quotient $\bar{G}$. Then
\[
\bar{\varphi}_1 = \bar{\varphi}_2 \quad \Longrightarrow\ \quad \varphi_1 = \varphi_2
\]

\vs \noindent
Therefore $\Aut(G) \hookrightarrow \Aut(\bar{G})$. 
\end{lem}
 
\begin{proof}
Let $x \in G \backslash H$, so that $G = \langle x \rangle$. Then 
\[
\varphi_1(x) = \varphi_2(x) + h, \quad \mbox{for some } h \in H
\]
because $\bar{\varphi}_1 = \bar{\varphi}_2$. Consider now the kernel of the homomorphism 
$\varphi_1 - \varphi_2$:
\[
K = \ker(\varphi_1 - \varphi_2) = \{g \in G : \varphi_1(g) = \varphi_2(g)\}.
\]
If $K = \{0\}$ then $\varphi_1 - \varphi_2 \in \Aut(G)$ and $(\varphi_1 - \varphi_2)(x) = h \in H$, impossible. Then $K$ is a non-trivial definable subgroup of $G$.  Since we are assuming that $H$ is the only non-trivial proper definable subgroup of $G$, it follows that $H \subseteq K$. As $\bar{\varphi}_1 = \bar{\varphi}_2$, then $\varphi_1 = \varphi_2$.
\end{proof}

\noindent
As $\dim \Aut(G) > 0$, there is an infinite definable family $F$ in $\Aut(G)$. Let $\bar{F} \subset \Aut(\bar{G})$ be the induced definable family on  the quotient $\bar{G}$. By Lemma \ref{embedding-quotient}, we know that $\bar{F}$ is infinite as well. By \cite{Peterzil-Starchenko1}, there is a definable product $\cdot$ on $\bar{G}$, such that $(\bar{G}, +, \cdot)$ is a definable field. We show below that $\Aut(G)$ is a 1-dimensional definable group, and it is definably isomorphic to the multiplicative group of $\bar{G}$, $\bar{G}^{\ast} = \bar{G} \backslash \{0\}$:

\begin{lem}
$\Aut(G) \cong (\bar{G}^{\ast}, \cdot)$.
\end{lem}

\begin{proof}
First let us see that $\Aut(\bar{G})$ is a definable group   definably isomorphic to $(\bar{G}^{\ast}, \cdot)$.
Let $f \in \Aut(\bar{G})$ and let $f(1) = a \in \bar{G}^{\ast}$. The set $\{x \in \bar{G} : f(x) = a \cdot x\}$ is a definable subgroup of $(\bar{G}, +)$ containing $0$ and $1$; but $(\bar{G}, +)$ does not have any proper definable subgroups, so $f(x) = a \cdot x$ for every $x \in \bar{G}$. On the other hand, every definable function $\bar{G} \to \bar{G}$ of the form $f(x) = a \cdot x$, with $a \in \bar{G}^{\ast}$, is a definable automorphism of $(\bar{G}, +)$, so $\Aut(\bar{G}) \cong (\bar{G}^{\ast}, \cdot)$. 

By Lemma \ref{embedding-quotient}, $\dim \Aut(G) = 1$ as well, and $\Aut(G)^0 \cong \bar{G}^{>0}$. Moreover
(for instance) $-id_G \mapsto -1 \in (\bar{G}^{\ast}, \cdot)$, so $\Aut(G) \cong (\bar{G}^{\ast}, \cdot)$.
\end{proof}

Fix now $x \in G$, $x \notin H$, and consider the set
\[
X = \{\varphi(x) : \varphi \in \Aut(G)\}.
\]

\vs \noindent
Clearly $X$ is a definable set, and $\dim X = \dim \Aut(G) = 1$, because $x$ is a generator of $G$.
Moreover $X \cap H = \emptyset$, because no element in $H$ is a generator. We claim that $K = X \cup \{0\}$ is a subgroup: \\

\begin{itemize}

\item $a \in K \Rightarrow -a \in K$, because if $\varphi \in \Aut(G)$, then $-\varphi \in \Aut(G)$. \\

\item $a, b \in K \Rightarrow a+b \in K$: \\

\begin{enumerate}
\item[(i)] If $b = -a$, then $a + b = 0$, and there is nothing to prove. \\

\item[(ii)] Let $b \neq -a$, with $\varphi(x) = a$ and $\psi(x) = b$. We claim that $\varphi + \psi \in \Aut(G)$. Otherwise
\[
F = \ker(\varphi + \psi )  = \{g \in G : \varphi(g) = - \psi(g)\}
\]
would be a proper (because $\varphi(x) \neq -\psi(x)$) non-trivial definable subgroup of $G$, so $H = F$. Therefore $f = (-\psi)^{-1} \circ \varphi$
would be a definable automorphism of $G$ that is the identity on $H$, and is not the identity on $G$. So
consider the set of all such automorphisms of $G$:
\[
Y = \{\varphi \in \Aut(G) : \varphi_{|_H} = \id_H \}
\]

Now $Y$ would be an infinite (because it contains $f$ and all its powers) definable subgroup of $\Aut(G)$. By dimension reasons $Y^0 = \Aut(G)^0$, which is impossible, because $\Aut(G)^0$ contains all multiplications by positive rational numbers, none of which is the identity on $H$. \\

Therefore $\varphi + \psi \in \Aut(G)$, and $(\varphi + \psi)(x) = a+b$.
\end{enumerate}
\end{itemize}

\vs \noindent
So we have shown that if $\dim \Aut(G) > 0$, then the definable 1-dimensional subroup $H$ has a definable complement in $G$, as wanted.
\end{proof}

\begin{que}
What if $\dim \Aut(G) = 0$?
\end{que}


We conclude with a remark and a question about the general case:

\begin{rem}
Let $G$ be a definably connected group in an o-minimal structure. Assume $G$ is elementarily equivalent to a real algebraic group. Then $\mathcal{N}(G)$ is nilpotent and $G$ has a definable Levi decomposition.
\end{rem}

\begin{proof}
In real algebraic groups any normal closed connected simply-connected subgroup is nilpotent, so $\mathcal{N}(G)$ must be nilpotent. Moreover, in real algebraic groups the intersection between the solvable radical and any Levi subgroup is finite, therefore Levi subgroups of $G$ from \cite{Conversano-PillayIII} must be definable.
\end{proof}

\begin{que}
Let $G$ be a definably connected group in an o-minimal structure such that $\mathcal{N}(G)$ is nilpotent and $G$ has a definable Levi decomposition. Is $G$ elementarily equivalent to a real algebraic group?
\end{que}
 
 {\bf Acknowledgments.} Thanks to Anand Pillay for suggesting (some time ago) to study nilpotent groups, and to Igor Klep for reading and commenting on an earlier version of this paper. Thanks also to the anonymous referee for their careful reading and insightful suggestions/questions.


\vspace{.2cm}

\end{document}